\newcommand{\ra}{\rightarrow}
\newcommand{\mc}{\mathbb{C}}
\newcommand{\mr}{\mathbb{R}}
\newcommand{\m}{\mathcal}
\newcommand{\mg}{\mathfrak{g}}
\newcommand{\bi}{\begin{itemize}}
\newcommand{\ei}{\end{itemize}}
\newtheorem{thm}{Theorem}[section]
\newtheorem{cor}[thm]{Corollary}
\newtheorem{lem}[thm]{Lemma}
\theoremstyle{definition}
\newtheorem{defn}{Definition}[section]
\theoremstyle{Conjecture}
\newtheorem{prob}{Problem}[section]
\theoremstyle{remark}
\theoremstyle{Example}
\newtheorem{exm}{Example}[section]
\newcommand{\be}{\begin{equation}}
\newcommand{\ee}{\end{equation}}
\newcommand{\bs}{\begin{split}}
\newcommand{\es}{\end{split}}
\newcommand{\bea}{\begin{eqnarray}}
\newcommand{\eea}{\end{eqnarray}}
\newcommand{\ben}{\begin{eqnarray*}}
\newcommand{\een}{\end{eqnarray*}}
\newcommand{\bet}{\begin{equation}

\begin{split}}
\newcommand{\eet}{\end{split}
\end{equation}}
\author{Fusheng Deng}
\address{Fusheng Deng: \ School of Mathematical Sciences, University of Chinese Academy of Sciences\\ Beijing 100049, P. R. China}
\email{fshdeng@ucas.ac.cn}
\author{Erlend Forn\ae ss Wold}
\address{Erlend Forn\ae ss Wold: Matematisk Institutt, Universitetet i Oslo, Postboks 1053 Blindern, 0316 Oslo, Norway}
\email{erlendfw@math.uio.no}
\begin{document}
\title[]{Hamiltonian Carleman approximation and the density property for coadjoint orbits}

\begin{abstract}
For a complex Lie group $G$ with a real form $G_0\subset G$, we prove that any
Hamiltionian automorphism $\phi$ of a coadjoint orbit $\mathcal O_0$ of $G_0$ whose connected components are simply connected,
may be approximated by holomorphic $\mathcal O_0$-invariant symplectic automorphism of the corresponding coadjoint orbit of $G$
in the sense of Carleman, provided that $\mathcal O$ is closed.
In the course of the proof, we establish the Hamiltonian density property for closed coadjoint orbits of all complex Lie groups.
\end{abstract}
\maketitle

\section{Introduction}
Let $\omega_0=dx_1\wedge dx_{n+1}+\cdots+dx_n\wedge dx_{2n}$ be the standard symplectic form on $\mr^{2n}$,
and let $\omega=dz_1\wedge dz_{n+1}+\cdots +dz_n\wedge dz_{2n}$ be the standard holomorphic symplectic form on $\mc^{2n}$.
We denote by $\mathrm{Symp}(\mr^{2n},\omega_0)$ the group of smooth symplectic automorphisms of $(\mr^{2n}, \omega)$,
and by $\mathrm{Symp}(\mc^{2n},\omega)$ the group of holomorphic symplectic automorphisms of $(\mc^{2n}, \omega)$ (throughout
this paper, smooth will mean $C^\infty$-smooth).
The following problem was proposed by N. Sibony (private communication):

\begin{prob}\label{mainprob}
Can any element in $\mathrm{Symp}(\mr^{2n},\omega_0)$
be approximated in the sense of Carleman by elements in $\mathrm{Symp}(\mc^{2n},\omega)$ leaving
$\mathbb R^{2n}$ invariant?
\end{prob}

Motivated by this problem, and also connections to physics (see below),
in this article we will consider the analoguous problem in the more general setting of
complexifications of coadjoint orbits of real Lie groups.  For a real or complex
manifold $X$ with a symplectic form $\omega$ we will denote by $\mathrm{Ham}(X,\omega)$
the smooth path-connected component of the identity in $\mathrm{Symp}(X,\omega)$.
We will prove the following.

\begin{thm}\label{thm:carleman}
Let $G$ be a complex Lie group, and let $G_0\subset G$ be a real form.
Let $\mathcal O_0\subset\mathfrak{g}_0^*$ be a coadjoint orbit whose connected components are simply connected,
let $\mathcal O\subset\mathfrak{g}^*$ be the coadjoint orbit containing $\mathcal O_0$,
assume that
$\mathcal O$ is closed,
and let $\omega_0$ (resp. $\omega$) denote
the canonical symplectic form on $\mathcal O_0$ (resp. $\mathcal O$).
Then given $\phi\in\mathrm{Ham}(\mathcal O_0,\omega_0)$, a
positive continuous function $\epsilon(x)$ on $\mathcal O_0$, and $r\in\mathbb N$,
there exists $\psi\in\mathrm{Ham}(\mathcal O,\omega)$ such that
such that $\psi(\mathcal O_0)=\mathcal O_0$ and
$$
||\psi(x)-\phi(x)||_{C^r}<\epsilon(x)
$$
for all $x\in\mr^{2n}$.
\end{thm}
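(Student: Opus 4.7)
The plan is to reduce the approximation of $\phi$ to approximating finitely many short-time Hamiltonian flows on $\mathcal{O}_0$, to replace each such flow by a composition of time-$1$ maps of complete $\mathcal{O}_0$-preserving holomorphic Hamiltonian flows on $\mathcal{O}$ via the Hamiltonian density property, and to assemble these into a Carleman approximation by a standard exhaustion induction. Simple connectivity of the components of $\mathcal{O}_0$ forces $H^1(\mathcal{O}_0;\mathbb{R})=0$, so every $\phi\in\mathrm{Ham}(\mathcal{O}_0,\omega_0)$ is the time-$1$ map of a smooth Hamiltonian isotopy $\{\phi_t\}_{t\in[0,1]}$ generated by a time-dependent Hamiltonian $H_t$. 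Given a compact exhaustion $K_1\Subset K_2\Subset\cdots$ of $\mathcal{O}_0$ and a fine partition $0=t_0<\cdots<t_N=1$, the factors $\phi_{t_{i+1}}\circ\phi_{t_i}^{-1}$ are $C^r$-close to the identity on $K_n$; a Darboux-chart cut-off then writes each factor as a finite composition of time-$1$ flows of compactly supported Hamiltonians $H_i\in C_c^\infty(\mathcal{O}_0,\mathbb{R})$.

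Let $\sigma:\mathcal{O}\to\mathcal{O}$ denote the antiholomorphic involution induced by the real form $G_0\subset G$, whose fixed locus contains $\mathcal{O}_0$. For a holomorphic function $F$ on $\mathcal{O}$, the symmetrization $\widetilde F := \tfrac12\bigl(F + \overline{F\circ\sigma}\bigr)$ is holomorphic and real-valued on $\mathcal{O}_0$, so its Hamiltonian vector field is tangent to $\mathcal{O}_0$ along $\mathcal{O}_0$ and its time-$1$ flow (when complete) preserves $\mathcal{O}_0$ as a set. The Hamiltonian density property for closed coadjoint orbits, the other main result of the paper, asserts precisely that the Lie algebra generated by complete holomorphic Hamiltonian vector fields on $\mathcal{O}$ is dense in the space of all holomorphic Hamiltonian vector fields. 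Combining this density statement with the symmetrization, and with holomorphic extensions of the $H_i$ to small Stein neighbourhoods of their supports in $\mathcal{O}$, I approximate the time-$1$ flow of each $H_i$ by a composition of time-$1$ flows of complete holomorphic Hamiltonians that are real on $\mathcal{O}_0$, hence globally preserve $\mathcal{O}_0$.

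To upgrade these local approximations to a Carleman approximation, I iterate with tolerances $\delta_n:=\min_{K_n}\epsilon/2^{n+1}$: having built $\psi_{n-1}\in\mathrm{Ham}(\mathcal{O},\omega)$ with $\psi_{n-1}(\mathcal{O}_0)=\mathcal{O}_0$ and $\|\psi_{n-1}-\phi\|_{C^r(K_{n-1})}<\delta_{n-1}$, I apply the preceding step to the correction $\phi\circ\psi_{n-1}^{-1}|_{\mathcal{O}_0}$ (a Hamiltonian diffeomorphism of $\mathcal{O}_0$ close to the identity on $\psi_{n-1}(K_{n-1})$) to produce an $\mathcal{O}_0$-preserving $\eta_n\in\mathrm{Ham}(\mathcal{O},\omega)$, so close to the identity on $\psi_{n-1}(K_{n-1})$ that $\psi_n:=\eta_n\circ\psi_{n-1}$ satisfies $\|\psi_n-\phi\|_{C^r(K_n)}<\delta_n$ and $\|\psi_n-\psi_{n-1}\|_{C^r(K_{n-1})}<\delta_{n-1}$. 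The telescoped limit $\psi=\lim_n\psi_n$ then converges in $C^r_{\mathrm{loc}}$, preserves $\mathcal{O}_0$, and meets the required weighted bound.

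The principal obstacle is the Hamiltonian density property itself: closed coadjoint orbits are generically non-Stein, so the standard Anders\'{e}n--Lempert--Forstneri\v{c} toolbox for complete holomorphic vector fields does not apply off the shelf. One must produce enough explicit complete holomorphic Hamiltonian vector fields on $\mathcal{O}$ — the natural candidates being the restrictions of linear functionals on $\mathfrak{g}^*$, whose Hamiltonian flows realise the coadjoint action of one-parameter subgroups of $G$ — and then verify that their Lie combinations span a dense subspace of all holomorphic Hamiltonian vector fields. Once that verification is in place, the fragmentation, symmetrization and Carleman assembly outlined above are essentially formal.
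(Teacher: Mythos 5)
Your overall strategy tracks the paper's: use $H^1(\mathcal O_0;\mathbb R)=0$ to realize $\phi$ as the time-$1$ map of a Hamiltonian isotopy, discretize the isotopy, invoke the Hamiltonian density property together with the symmetrization $\widetilde F=\tfrac12\bigl(F+\overline{F\circ\sigma}\bigr)$ to produce $\mathcal O_0$-preserving complete holomorphic Hamiltonian fields, and assemble a Carleman approximant by induction over a compact exhaustion.

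However, the Carleman assembly as written has a genuine gap. At step $n$ you only control $\|\psi_n-\psi_{n-1}\|_{C^r}$ on the compact sets $K_{n-1}\subset\mathcal O_0$, but $\mathcal O_0$ is a \emph{totally real} submanifold of $\mathcal O$ of half the real dimension; $C^r$-closeness on $K_{n-1}$ says nothing about the behaviour of the holomorphic maps $\psi_n$ on complex neighbourhoods of points of $\mathcal O_0$, nor on any other compact subset of $\mathcal O$. Without such control the sequence $\psi_n$ has no reason to converge locally uniformly on $\mathcal O$, and the limit $\psi$ may fail to exist as a holomorphic automorphism of $\mathcal O$ --- which is exactly what the theorem asserts. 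The paper's Theorem~\ref{thm:al} is designed precisely to cure this: it simultaneously approximates $\psi_t$ on a real compact set $a\overline{\mathbb B_{\mathbb R}^N}\cap Z_0$ \emph{and} approximates the identity on a complex ball $b\overline{\mathbb B^N}\cap Z$. The mechanism is to choose the cut-off real potential to vanish on a real ball of radius $R$, extend it by zero over the complex ball $R\mathbb B^N_{\mathbb C}$, and then exploit the polynomial convexity of $R\mathbb B^N_{\mathbb C}\cup\mathbb R^N$ to produce a holomorphic polynomial potential that is uniformly small on the complex ball and close to the target on the real compact set. Your symmetrization step gives tangency to $\mathcal O_0$, but it does not by itself give the complex-ball control, which must be engineered through a polynomial-convexity argument of this kind.

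A secondary point: you flag the Hamiltonian density property as the principal obstacle, but in the paper this follows by a short Anders\'{e}n--Lempert-type polynomial decomposition (Lemma~\ref{lem:density}), using completeness of the fields $X_u$ for linear $u$. The heavier lifting is the $C^r$ convergence machinery --- Lemma~\ref{lem:flow convergence} on $C^r$ stability of flows, Theorem~\ref{thm:consistent convergenceCk} on $C^r$ convergence of consistent algorithms, and Corollary~\ref{cor:freezetime} on time-freezing --- which your sketch does not address. These results are what guarantee that, after discretizing in time and replacing each frozen field by a sum of complete fields, the resulting compositions of time-$1$ flows converge to the original isotopy in $C^r$, not merely in $C^0$.
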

Here the $C^r$-approximation may be obtained with respect to any given
Riemannian metric on the appropriate jet-space of $\mathcal O_0$.  The connection with Problem \ref{mainprob}
is that if $G$ (resp. $G_0$) is the complex (resp. real) Heisenberg group, then there
are coadjoint orbits $(\mathcal O_0,\mathcal O)\approx (\mathbb R^{2},\mathbb C^{2})$
with $(\mathbb R^{2},\mathbb C^{2})$ equipped with the standard symplectic structures above.
The case of an arbitrary $n$ is obtained by taking products. Note that
$\mathrm{Symp}(\mathbb R^{2n},\omega_0)$ coincides with $\mathrm{Ham}(\mathbb R^{2n},\omega_0)$
and $\mathrm{Symp}(\mathbb C^{2n},\omega)$ coincides with $\mathrm{Ham}(\mathbb C^{2n},\omega)$.

For information about Carleman approximation by functions,
please see the recent survey \cite{FFW18}.
Related to Theorem \ref{thm:carleman},
it was proved in \cite{K-W18} that any diffeomorphism of $\mr^k$
can be approximated by holomorphic automorphisms of $\mc^n$ in the Carleman sense,
provided that $k<n$, however, in that case $\mr^k$ was not left invariant.  \

In the course of the proof we will also prove the following, which follows from quite standard
arguments in Anders\'{e}n-Lempert theory, as soon as one considers the right setup.

\begin{thm}\label{thm:Ham desity coad orbits}
All closed coadjoint orbits of a complex Lie group have the Hamiltonian density property.
\end{thm}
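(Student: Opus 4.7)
My plan is to exploit the KKS symplectic structure on a coadjoint orbit $\mathcal{O}\subset\mathfrak{g}^*$, combine it with the classical ``product of Poisson-commuting Hamiltonians'' trick, and then promote a polynomial-level density statement to the full holomorphic category using the hypothesis that $\mathcal{O}$ is closed in the ambient vector space.

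First I would observe that the coadjoint action of $G$ on $\mathcal{O}$ is Hamiltonian: the fundamental vector field associated to $X\in\mathfrak{g}$ coincides with the Hamiltonian vector field $X_{\mu_X}$ of the linear function $\mu_X(\xi):=\xi(X)$. Since $X_{\mu_X}$ is the infinitesimal generator of a holomorphic $G$-action, it is a complete holomorphic Hamiltonian vector field on $\mathcal{O}$, so the family $\{X_{\mu_X}:X\in\mathfrak{g}\}$ already sits inside the collection of complete holomorphic Hamiltonian vector fields available to us.

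The crux is then to enlarge this collection. I would invoke the standard lemma that if $f,g$ are holomorphic with $\{f,g\}=0$ and $X_f,X_g$ are complete, then $X_{fg}$ is complete as well, with flow $\phi^t_{X_{fg}}=\phi^{tf}_{X_g}\circ \phi^{tg}_{X_f}$. Because $\{\mu_X,\mu_X\}=0$ trivially, induction on $n$ gives completeness of $X_{\mu_X^n}$ for every $X\in\mathfrak{g}$ and every $n\ge1$. To reach mixed monomials I would use a polarization argument: for $X_1,\dots,X_k\in\mathfrak{g}$ and parameters $t_1,\dots,t_k\in\mathbb{C}$, the combination $Y(t):=\sum_i t_i X_i$ still lies in $\mathfrak{g}$, so $X_{(\sum_i t_i\mu_{X_i})^n}$ is complete for each $t$. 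Expanding this vector field as a polynomial in the $t_i$ and inverting a Vandermonde system over finitely many generic values of $t$ expresses each individual monomial vector field $X_{\mu_{X_1}^{a_1}\cdots\mu_{X_k}^{a_k}}$ as a $\mathbb{C}$-linear combination of complete holomorphic Hamiltonian vector fields. Taking $X_1,\dots,X_k$ to be a basis of $\mathfrak{g}$, this shows that $X_P$ lies in the linear span of complete holomorphic Hamiltonian vector fields on $\mathcal{O}$ for every polynomial $P\in\mathbb{C}[\mathfrak{g}^*]$.

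To finish, I would use closedness: $\mathcal{O}$ is a closed complex submanifold of $\mathfrak{g}^*\cong\mathbb{C}^N$, so by Cartan's Theorem B every holomorphic function on $\mathcal{O}$ extends to a holomorphic function on $\mathfrak{g}^*$, and polynomials are compact-open dense in the space of holomorphic functions on $\mathfrak{g}^*$ via Taylor expansion at the origin. Restricting to $\mathcal{O}$ therefore makes polynomial Hamiltonians dense in all holomorphic Hamiltonians, and passing to Hamiltonian vector fields yields the Hamiltonian density property. \emph{The main obstacle} I anticipate is executing the polarization step cleanly and matching the topology of approximation to the precise formulation of HDP adopted in the paper; once that is pinned down, the closing density step from Cartan B is routine.
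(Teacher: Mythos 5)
Your proposal is correct and follows essentially the same route as the paper: the fundamental vector fields $X_{\mu_X}$ of the coadjoint action are complete with linear potentials (Lemma \ref{lem:u is potential}), powers of such linear potentials again give complete fields (your Poisson-commuting product lemma is the paper's observation $X_{f^m}=mf^{m-1}X_f$), and your polarization/Vandermonde step is precisely the Anders\'en--Lempert decomposition of monomials into powers of linear forms that the paper cites in Lemma \ref{lem:density}. The closing step, using closedness of $\mathcal O$ in $\mathfrak g^*$ to reduce to polynomial potentials via extension and Taylor approximation, matches the paper's framework as well, so no gaps.
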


The corresponding theorem for $\mc^{2n}$ was proved by F. Forstneri\v{c} \cite{Forstneric96}.
The volume density property of  closed coadjoint orbits of a complex Lie group $G$
in the case that $G$ is an algebraic group is a corollary of Theorem 1.3 in \cite{K-K17} due to
Kaliman and Kutzschebauch.

\medskip

Symplectic manifolds are very important objects in physics.
In a recent program called "Quantization via Complexification" proposed by Gukov and Witten,
the quantization of a symplectic manifold $(M,\omega_0)$ is studied through the
quantization of its complexification $(X,\omega,\tau)$ (\cite{Gukov-Witten09}\cite{Gukov11}\cite{Witten11}).
In general, the physical symmetry on $(M,\omega_0)$ is given by $\mathrm{Symp}(M,\omega_0)$ (resp. $\mathrm{Ham}(M,\omega_0)$),
while that of the complexification $(X,\omega,\tau)$ is given by $\mathrm{Symp}(X, \omega, \tau)$ (resp. $\mathrm{Ham}(X, \omega, \tau)$).
So in physical terminology, our result is transferred to the
following:  no symmetry is broken after complexification.

\medskip

The article is organized as follows. In Section 2 we will give the relevant background on coadjoint oribits.
In Section 3 we start by setting up a general framework for our approximation.
Then we give some results on $C^r$ regularity of flows of families of smooth vector fields and
$C^r$-convergence of consistent algorithms, following the unpublished Diplomarbeit \cite{Schar} by B. Sch\"{a}r at
the University of Bern, and we prove some results of Anders\'{e}n-Lempert type, leaving invariant a totally real center.
Finally we prove the main theorem. In Section 4 we include some standard examples of coadjoint orbits.

\medskip

\textbf{Acknowledgement:}
The first author is partially supported by the NSFC grant 11871451. The second author is supported by the
NRC grant number 240569.
\section{Preliminaries on coadjoint orbits of Lie groups and complexfications}\label{sec:basic reults aboue coad. orb.}

\subsection{Canonical symplectic structures on coadjoint orbits}\label{subsec:canonical symp}
In this subsection we will collect some standard material on co-adjoint orbits (see \cite{Kir05} for more details).
Let $G$ be a real Lie group with Lie algebra $\mathfrak{g}$.
As a vector space, we can identify $\mathfrak g$ with $T_eG$,
the tangent space of $G$ at the identity.
For $g\in G$, the conjugate map from $G$ to itself
given by $x\mapsto gxg^{-1}$ fixes $e$,
and hence induce a linear isomorphism of $\mathfrak g$.
In this manner,
we get a linear representation $\mathrm{Ad}$ of $G$ on $\mathfrak g$,
which is called the adjoint representation of $G$.
The adjoint representation of $G$ induces a representation
of $G$ on the dual space $\mathfrak g^*$ of $\mathfrak g$,
which is called the coadjoint representation of $\mathfrak g$
and will be denoted by $\mathrm{Ad}^*$.
More precisely, the coadjoint representation is defined as
$$\left(\mathrm{Ad}^*(g)\xi, v\right):=\left(\xi, \mathrm{Ad}(g^{-1})v\right), \ g\in G,\ \xi\in\mg^*,\ v\in\mg,$$
where $(\cdot, \cdot)$ denotes the pairing between $\mg^*$ and $\mg$.
The orbits of elements in $\mathfrak{g}^*$ under the action of $G$
are called coadjoint orbits.

%%
%%
%% Can we include here a reference for the fact that the orbits are embedded submanifolds, not necessarily closed?
%%
%%

A fundamental fact about coadjoint orbits $\mathcal O$ is that
they carry canonical $G$-invariant symplectic structures,
which are defined as follows.
For a vector $v\in \mathfrak g$,
we have that
$\mathrm{Ad}^*$ induces an action of the one-parameter subgroup
$\{\exp(tv); t\in\mr\}$ of $G$ generated by $v$ on $\m O$,
where $\exp:\mathfrak g\ra G$ is the exponential map.
So we get a smooth vector field $X_v$ on $\m O$ whose value at $\xi\in\m O$ is given by
$$
X_v(\xi)=\frac{d}{dt}|_{t=0} \mathrm{Ad}^*(\exp(tv))\xi.
$$
For $\xi\in\m O$, we have that  $\{X_v(\xi); v\in\mathfrak g\}=T_\xi \m O$
since the action of $G$ on $\m O$ is transitive.

%%
%% Why does this follow from transitivity?
%%

The kernel of the linear map $v\mapsto X_v(\xi)$ from $\mg$ to $T_\xi\m O$ is $\mg_\xi$,
which is the Lie algebra of the isotropy subgroup $G_\xi:=\{g\in G; g\xi=\xi\}$ of $G$ at $\xi$.
The symplectic form $\omega$ on $\m O$ is defined by
$$\omega(X_u(\xi), X_v(\xi)):=\xi([u,v]), \ u, v\in\mathfrak g.$$
We check that $\omega$ is a well-defined symplectic form on $\m O$:
\begin{itemize}
\item $\omega$ is well defined.
It suffices to check that for $u\in\mg$ with $X_u(\xi)=0$ we have $\xi([u,v])=0$ for all $v\in\mg$.
If $X_u(\xi)=0$, then $u\in\mg_\xi$, the Lie algebra of the isotropy subgroup $G_\xi$.
Hence $\mathrm{Ad}^*(\exp(tu))\xi=\xi$ for all $t\in\mr$,
which is equivalent to that $(\mathrm{Ad}^*(\exp(tu))\xi, v)=(\xi,v)$ for all $t\in\mr$ and $v\in\mg$.
Note that $(\mathrm{Ad}^*(\exp(tu))\xi, v)=(\xi, \mathrm{Ad}(exp(-tu))v)$,
taking derivative with $t$ at $t=0$ we get $\xi([u,v])=-(\xi, [-u,v])=0$.

%%
%% I changed the sign here. Is that correct?
%%

\item $\omega$ is nondegenerate.
For $u\in\mg$, we need to show that $\omega(X_u(\xi), X_v(\xi))=0$ for all $v\in\mg$ implies $X_u(\xi)=0$.
By definition,
\begin{equation*}
\begin{split}
\omega(X_u(\xi), X_v(\xi))
&=-\xi([-u,v])\\
&=-\left(\xi, \frac{d}{dt}|_{t=0}\mathrm{Ad}(\exp(-tu))v\right)\\
&=-\left(\frac{d}{dt}|_{t=0}\mathrm{Ad}^*(\exp(tu))\xi,v\right).
\end{split}
\end{equation*}
So $\omega(X_u(\xi), X_v(\xi))=0$ for all $v\in\mg$ implies $\frac{d}{dt}|_{t=0}\mathrm{Ad}^*(\exp(tu))\xi=X_u(\xi)=0$.
\item $\omega$ is closed.

Since all $X_u (u\in\mg)$ generate $T_\xi \m O$ for all $\xi\in\m O$,
and $\omega(X_u, X_v)$ are smooth functions on $\m O$, we have that
$\omega$ is a smooth 2-form on $\m O$.
Letting $u,v,w\in \mg$, we have
\begin{equation*}
\begin{split}
&X_u\omega(X_v(\xi),X_w(\xi))\\
=&\frac{d}{dt}|_{t=0}\omega(\mathrm{Ad}^*(\exp(tu))\xi)\left(X_v(\mathrm{Ad}^*(\exp(tu))\xi), X_w(\mathrm{Ad}^*(\exp(tu))\xi)\right)\\
=&\frac{d}{dt}|_{t=0}\left(\mathrm{Ad}^*(\exp(tu))\xi, [v,w]\right)|\\
=&\frac{d}{dt}|_{t=0}\left(\xi, Ad(\exp(-tu))[v,w]\right)|\\
=&-(\xi,[u,[v,w]]).
\end{split}
\end{equation*}
By a basic formula about exterior differentiation, we have
\begin{equation*}
\begin{split}
& d\omega(X_u,X_v,X_w)\\
=&X_u\omega(X_v,X_w)-X_v\omega(X_u,X_w)+X_w\omega(X_u,X_v)\\
&-\omega([X_u,X_v],X_w)+\omega([X_u,X_w],X_v)-\omega([X_v,X_w],X_u),
\end{split}
\end{equation*}
which vanishes for all $u,v,w\in\mg$ by the Jacobi identity and the above formula.
Hence $d\omega=0$.
\end{itemize}

For $u\in\mathfrak g$, we can view $u$ as a function on $\mathfrak g^*$ and hence a smooth function on the orbit $\m O$.
A basic fact is the following.
\begin{lem}\label{lem:u is potential}
For any $u\in\mg$, the vector field $X_u$ on $\m O$ is Hamiltonian
with respect to $\omega$ and it's potential is $u$ itself.
In particular, the symplectic structure $\omega$ on $\m O$ is invariant under the action of identity component of $G$.
\end{lem}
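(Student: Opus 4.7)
The plan is to verify the Hamiltonian identity $du=\iota_{X_u}\omega$ directly, viewing $u\in\mg$ as the linear functional $\xi\mapsto(\xi,u)$ on $\mg^*$ restricted to $\m O$. At each point $\xi\in\m O$ the vectors $\{X_v(\xi):v\in\mg\}$ span $T_\xi\m O$ by transitivity of the $G$-action, so it will be enough to show
$$
du_\xi\bigl(X_v(\xi)\bigr)=\omega\bigl(X_u(\xi),X_v(\xi)\bigr)=\xi([u,v])
$$
for every $v\in\mg$, the second equality being the very definition of $\omega$.

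First I would unwind the left-hand side using the defining formula for $X_v$ and the $\mathrm{Ad}/\mathrm{Ad}^*$ duality:
$$
du_\xi\bigl(X_v(\xi)\bigr)=\tfrac{d}{dt}\bigm|_{t=0}u\bigl(\mathrm{Ad}^*(\exp(tv))\xi\bigr)=\tfrac{d}{dt}\bigm|_{t=0}\bigl(\xi,\mathrm{Ad}(\exp(-tv))u\bigr).
$$
Since $\tfrac{d}{dt}|_{t=0}\mathrm{Ad}(\exp(-tv))u=-[v,u]=[u,v]$, this equals $\xi([u,v])$, exactly as required. Thus $X_u$ is Hamiltonian with potential $u$. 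This is the same style of calculation already carried out in the bullets verifying that $\omega$ is well-defined and non-degenerate, so the proof is essentially bookkeeping.

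For the second assertion, once $\iota_{X_u}\omega=du$ is known, Cartan's formula together with the closedness of $\omega$ established just above yields
$$
\mathcal L_{X_u}\omega=d\iota_{X_u}\omega+\iota_{X_u}d\omega=d(du)+0=0.
$$
The flow of $X_u$ on $\m O$ is by construction the one-parameter subgroup action $t\mapsto\mathrm{Ad}^*(\exp(tu))$, hence $\mathrm{Ad}^*(\exp(tu))^*\omega=\omega$ for every $t\in\mr$. Since the identity component of $G$ is generated as a group by $\exp(\mg)$, invariance of $\omega$ propagates from the one-parameter subgroups to the entire identity component.

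No serious obstacle is expected: the lemma reduces to two short formal computations, both variants of calculations already performed in the preceding paragraphs. The only delicate point is the careful tracking of signs in the duality identity $(\mathrm{Ad}^*(g)\xi,v)=(\xi,\mathrm{Ad}(g^{-1})v)$ and in the differentiation $\tfrac{d}{dt}|_{t=0}\mathrm{Ad}(\exp(-tv))u=-[v,u]$, in order to land on $\xi([u,v])$ with the correct sign matching the definition of $\omega$.
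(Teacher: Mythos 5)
Your proof is correct and follows the same route as the paper: verify $du(X_v)(\xi)=\xi([u,v])$ by unwinding the $\mathrm{Ad}^*/\mathrm{Ad}$ duality and differentiating at $t=0$, which is precisely the computation the paper records. The only addition is your explicit Cartan-formula argument for the invariance statement, which the paper leaves implicit as a standard consequence.
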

\begin{proof}
It suffices to show that
$$du(X_v)(\xi)=i_{X_u}\omega(X_v)(\xi)=\xi([u,v])$$
for all $v\in\mg$. Note that
\begin{equation*}
\begin{split}
 &du(X_v)\\
=&\frac{d}{dt}(\mathrm{Ad}^*\left(\exp(tv))\xi, u\right)|_{t=0}\\
=&\left(\xi, \frac{d}{dt}\mathrm{Ad}(\exp(-tv))u|_{t=0}\right)\\
=&\xi([u,v]).
\end{split}
\end{equation*}
\end{proof}

The following lemma, which is a direct corollary of Theorem 2.13 in \cite{Mon-Zip55}, is also useful.

\begin{lem}\label{lem:closed=embed}
For $\xi\in\mg^*$, if the coadjoint orbit $\mathcal O=G\cdot\xi$ is closed in $\mg^*$,
then the canonical map $G/G_\xi\rightarrow \mg^*$ with image $\mathcal O_\xi$ is a proper embedding
and hence $\mathcal O_\xi$ is a closed submanifold of $\mg^*$.
\end{lem}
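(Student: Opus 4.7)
The plan is to combine the quotient manifold theorem with the cited Montgomery--Zippin result to upgrade the tautological orbit bijection $G/G_\xi \to \mathcal{O}_\xi$ to a proper smooth embedding.

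First I would observe that the isotropy group
\[
G_\xi = \{g \in G : \mathrm{Ad}^*(g)\xi = \xi\}
\]
is closed, since it is the preimage of $\{\xi\}$ under the continuous orbit map $g\mapsto\mathrm{Ad}^*(g)\xi$. By the closed subgroup theorem and the quotient manifold theorem, $G/G_\xi$ carries a unique smooth manifold structure on which $G$ acts smoothly and transitively, and the canonical map
\[
\iota : G/G_\xi \to \mg^*, \quad gG_\xi \mapsto \mathrm{Ad}^*(g)\xi
\]
is well defined, smooth, injective, and $G$-equivariant, with image $\mathcal{O}_\xi$.

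Next, by $G$-equivariance together with transitivity on $G/G_\xi$, the differential $d\iota$ has constant rank. At the base point $eG_\xi$ its kernel is precisely $\mg_\xi$ (the same linear-algebraic computation as already recorded in Section \ref{subsec:canonical symp}, where it was shown that $v\mapsto X_v(\xi)$ has kernel $\mg_\xi$), so $\iota$ is an injective immersion.

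The crux is promoting this injective immersion to a topological embedding, since injective immersions of manifolds need not in general be embeddings. Here I would invoke Theorem 2.13 of \cite{Mon-Zip55}: because $\mathcal{O}_\xi$ is assumed closed, hence locally closed, in $\mg^*$, that theorem guarantees that the continuous bijection $\iota : G/G_\xi \to \mathcal{O}_\xi$ is a homeomorphism when $\mathcal{O}_\xi$ carries the subspace topology. Combined with the immersion property this makes $\iota$ a smooth embedding. Properness is then automatic: the inclusion of the closed subset $\mathcal{O}_\xi \hookrightarrow \mg^*$ is a proper map, and composing with the homeomorphism $G/G_\xi \to \mathcal{O}_\xi$ yields a proper map. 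The only nontrivial input is the Montgomery--Zippin step; the rest is bookkeeping.
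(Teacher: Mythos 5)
Your proof is correct and follows the same route the paper itself relies on: the paper does not actually prove the lemma, stating only that it is ``a direct corollary of Theorem 2.13 in \cite{Mon-Zip55},'' and your argument is precisely the standard deduction of that corollary (closed subgroup theorem, constant-rank immersion, Montgomery--Zippin to promote the bijection onto the locally compact orbit to a homeomorphism, properness from closedness of the image). The one point worth being explicit about when invoking Montgomery--Zippin is that their theorem requires the target of the orbit map to be locally compact, which is exactly where the hypothesis that $\mathcal O_\xi$ is closed (hence locally closed, hence locally compact in the subspace topology) enters; you say this, so the argument is complete.
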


The above construction starts from a real Lie group.
In the same way, we can start from a complex Lie group
and carry out the same procedure.
Then all coadjoint orbits in $\mg^*$ are complex manifolds
with canonical $G$-invariant holomorphic symplectic forms,
and the statements parallel to Lemma \ref{lem:u is potential} and Lemma \ref{lem:closed=embed} hold.

\subsection{Complexification of coadjoint orbits of real Lie groups.}

Let $G$ be a connected complex Lie group with a real form $G_0$,
\emph{i.e.}, $G_0$ is a Lie subgroup of $G$ (not necessarily closed) and $\mathfrak g=\mathfrak g_0\oplus i\mathfrak g_0$,
where $\mathfrak g$ and $\mathfrak g_0$ are the Lie algebras
of $G$ and $G_0$ respectively.
We want to show that coadjoint orbits of $G$ are complexifications of the corresponding
coadjoint orbits of $G_0$.
We start by introducing the following

\begin{defn}\label{def:complexification}
A complexification of a symplectic manifold $(M^{2n},\omega_0)$ is a triple $(X^{2n},\omega, \tau)$:
\bi
\item $(X,\omega)$ is a holomorphic symplectic manifold,
\item $\tau$ is an anti-holomorphic involution of $X$,
\item $M\hookrightarrow X$ (proper embedding) and $\omega|_M=\omega_0$,
\item $\tau|_M=Id$, $\tau^*\omega=\bar{\omega}$.
\ei
\end{defn}

For a point $\xi\in \mathfrak g^*_0\ (\text{resp.\ $\mathfrak g^*$})$,
the coadjoint orbit through $\xi$ in $\mathfrak g^*_0$ (resp. $\mg^*$)
will be denoted by $\m O^{\mr}_\xi$ (resp. $\m O_\xi$).
The canonical symplectic form on $\m O^{\mr}_\xi$ (defined in \S \ref{subsec:canonical symp}) is denoted by $\omega_0$,
and the canonical holomorphic symplectic form on $\m O_\xi$ is denoted by $\omega$.
For a point $\xi\in \mathfrak g^*_0$,
we can also view $\xi$ as an element in $\mg^*$.
Then it is clear that $\m O^{\mr}_\xi\subset \m O_\xi$ and and $\omega|_{\m O^\mr_\xi}=\omega_0$.

We will focus on closed orbits.
In the case that $G_0$ is reductive,
the adjoint representation of $G_0$ (resp. $G$) on $\mg_0$ (resp. $\mg$)
and the coadjoint representation $G_0$ (resp. $G$) on $\mg^*_0$ (resp. $\mg^*$)
are isomorphic, and hence the adjoint orbits and coadjoint orbits coincide.
For $\xi\in \mg_0$, the closedness of $\m O^{\mr}_\xi$ in $\mg$ is equivalent
to the closedness of $\m O_\xi$ in $\mg$,
and they are equivalent to that $\xi$ is semisimple,
that is $Ad_\xi$, viewed as an operator on both $\mg _0$ and $\mg$,
is diagonalizable.
In the general case, we will show that if $\m O_\xi$ is closed then $\m O^{\mr}_\xi$ is also closed.
We will prove the following.

\begin{thm}\label{thm:complexification of real orbit}
If $\xi\in \mg^*_0$ is such that $\m O_\xi\subset\mg^*$ is closed, then the
following holds.
\begin{itemize}
\item[(1)] $\m O_\xi$ is a closed complex submanifold of $\mg^*$;
\item[(2)] $\m O^\mr_\xi$ consists of  some connected components of $\m O_\xi\cap\mg^*_0$ and is a closed submanifold of $\m O_\xi$;
\item[(3)] $(\m O_\xi, \omega, \tau)$ is a complexification of $(\m O^\mr_\xi, \omega_0)$ in the sense of Definition \ref{def:complexification}.
\end{itemize}
\end{thm}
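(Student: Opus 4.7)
My plan is to dispatch parts (1), (2), and (3) in that order. For part (1), I would start from the dimensional computation of the stabiliser. Writing an element $v=v_1+iv_2\in\mathfrak{g}=\mathfrak{g}_0\oplus i\mathfrak{g}_0$ and using that $\xi\in\mathfrak{g}_0^*$ extends complex-linearly to $\mathfrak{g}$, the equation $\mathrm{ad}_v^*\xi=0$ decouples over $\mathbb{R}$ into $\xi([v_1,w])=0=\xi([v_2,w])$ for every $w\in\mathfrak{g}_0$. This gives $\mathfrak{g}_\xi=(\mathfrak{g}_0)_\xi\oplus i(\mathfrak{g}_0)_\xi$, hence the crucial dimensional identity $\dim_{\mathbb{C}}\mathcal{O}_\xi=\dim_{\mathbb{R}}\mathcal{O}_\xi^{\mathbb{R}}$. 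The complex version of Lemma \ref{lem:closed=embed}, applied under the hypothesis that $\mathcal{O}_\xi$ is closed, then realises $\mathcal{O}_\xi$ as a proper holomorphically embedded complex submanifold of $\mathfrak{g}^*$, which is (1).

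For (2), I would introduce the anti-linear complex conjugation $\tau\colon\mathfrak{g}^*\to\mathfrak{g}^*$ fixing $\mathfrak{g}_0^*$ together with the anti-holomorphic involution $\sigma$ of $G$ fixing $G_0$. The intertwining $\tau\circ\mathrm{Ad}^*(g)=\mathrm{Ad}^*(\sigma(g))\circ\tau$ shows $\tau(\mathcal{O}_\xi)=\mathcal{O}_\xi$, so $\tau|_{\mathcal{O}_\xi}$ is an anti-holomorphic involution of a complex manifold; in local holomorphic coordinates straightening $\tau$ to coordinate conjugation, its fixed set $\mathcal{O}_\xi\cap\mathfrak{g}_0^*$ is automatically a closed totally real submanifold of $\mathcal{O}_\xi$ of real dimension $\dim_{\mathbb{C}}\mathcal{O}_\xi$. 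For $\eta\in\mathcal{O}_\xi^{\mathbb{R}}$ the fundamental vector fields $\{X_v(\eta):v\in\mathfrak{g}_0\}$ span $T_\eta(G_0\cdot\eta)\subset T_\eta(\mathcal{O}_\xi\cap\mathfrak{g}_0^*)$, and by the dimension count from (1) this inclusion is an equality. Hence every $G_0^0$-orbit is open in $\mathcal{O}_\xi\cap\mathfrak{g}_0^*$, and since the $G_0$-orbits partition the intersection into mutually disjoint opens, $\mathcal{O}_\xi^{\mathbb{R}}$ is both open and closed in $\mathcal{O}_\xi\cap\mathfrak{g}_0^*$, proving (2).

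Part (3) is then a checklist against Definition \ref{def:complexification}: holomorphic symplecticity of $(\mathcal{O}_\xi,\omega)$ and the compatibility $\omega|_{\mathcal{O}_\xi^{\mathbb{R}}}=\omega_0$ come from the constructions recalled in \S\ref{subsec:canonical symp}; properness of the embedding $\mathcal{O}_\xi^{\mathbb{R}}\hookrightarrow\mathcal{O}_\xi$ and $\tau|_{\mathcal{O}_\xi^{\mathbb{R}}}=\mathrm{Id}$ are immediate from (2) and the definition of $\tau$; the only remaining point, $\tau^*\omega=\bar\omega$, reduces via $d\tau(X_u(\eta))=X_{\bar u}(\tau(\eta))$ and Kirillov's formula $\omega_\eta(X_u,X_v)=\eta([u,v])$ to $\bar\eta([\bar u,\bar v])=\overline{\eta([u,v])}$, which is just conjugation applied to the bracket and the pairing. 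I expect the main obstacle to be the topological step in (2): bootstrapping the infinitesimal equality of tangent spaces into the global statement that $\mathcal{O}_\xi^{\mathbb{R}}$ fills entire connected components of $\mathcal{O}_\xi\cap\mathfrak{g}_0^*$. This step needs both the smoothness of the intersection (supplied by anti-holomorphicity of $\tau$) and the orbit partition, in order to promote openness of each $G_0$-orbit to closedness by connectedness.
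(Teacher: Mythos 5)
Your argument is correct in substance and its skeleton matches the paper's proof: the stabilizer computation $\mg_\xi=(\mg_0)_\xi\oplus i(\mg_0)_\xi$ (this is the paper's Lemma \ref{lem:real orbit ttr}), the restriction of the conjugation $\tau$ to an anti-holomorphic involution of $\m O_\xi$, the identification of $\m O_\xi\cap\mg^*_0$ as a totally real submanifold of maximal dimension, and the open--closed argument showing each $G_0$-orbit is open in the fixed locus, so that $\m O^\mr_\xi$ is a union of its connected components. The differences lie in how individual steps are justified: you obtain $\tau(\m O_\xi)=\m O_\xi$ and $\tau^*\omega=\bar\omega$ by equivariance computations, whereas the paper deduces both from the identity theorem (a complex submanifold, resp. a holomorphic $2$-form, is determined by its restriction to the maximal totally real set $\m O^\mr_\xi$); and you get the submanifold structure of the fixed locus by locally straightening $\tau$ to coordinate conjugation, where the paper cites Kobayashi's fixed-point theorem for compact group actions together with Montgomery--Zippin for the proper embedding. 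Your direct verification of $\tau^*\omega=\bar\omega$ via $d\tau(X_u)=X_{\theta(u)}\circ\tau$ is self-contained and arguably cleaner than the identity-theorem route.

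The one step you should repair is the appeal to ``the anti-holomorphic involution $\sigma$ of $G$ fixing $G_0$'': such a group-level conjugation always exists on the universal cover of $G$, but it descends to $G$ only when it preserves the corresponding central subgroup, which is not automatic for a general connected complex Lie group with a (possibly non-closed) real form. This does not damage your proof, because all you actually need is the infinitesimal intertwining $\tau\circ\mathrm{Ad}^*(\exp v)\circ\tau=\mathrm{Ad}^*(\exp\theta(v))$ for $v\in\mg$, where $\theta(a+ib)=a-ib$ is the Lie-algebra conjugation; since $G$ is connected, $\mathrm{Ad}^*(G)$ is generated by such exponentials, so $\tau$ still maps $\m O_\xi$ onto itself and intertwines the fundamental vector fields exactly as you use in parts (2) and (3). (Alternatively, the paper's identity-theorem argument, using that $\m O^\mr_\xi$ is maximal totally real in both $\m O_\xi$ and $\tau(\m O_\xi)$, sidesteps the issue entirely.) Note also that the dimension identity must be invoked at every point $\eta\in\m O_\xi\cap\mg^*_0$, not only at $\xi$; the same stabilizer computation applies verbatim there, which is implicitly how the paper argues as well.
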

 Note that (1) in the above theorem is a direct corollary of Theorem 2.13 in \cite{Mon-Zip55}.

\begin{proof}

We start by giving a lemma.

\begin{lem}\label{lem:real orbit ttr}
Let $\xi\in \mg^*_0$ be such that $\m O_\xi\subset\mg^*$ is closed. Then
the following holds.
\begin{itemize}
\item[(1)] $G_{0,\xi}$ is a real form of $G_\xi$,
where $G_{0,\xi}$ and $G_\xi$ are the isotropy subgroups of
$G_0$ and $G$ at $\xi$ respectively.
\item[(2)] $\m O^{\mr}_\xi$ is a totally real submanifold of $\m O_\xi$ of maximal dimension.
\end{itemize}
\end{lem}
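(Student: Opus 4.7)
The plan is to reduce everything to one Lie-algebra decomposition, namely $\mg_\xi=\mg_{0,\xi}\oplus i\mg_{0,\xi}$, from which (1) and (2) follow in a largely formal manner.

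First I will prove the decomposition. The isotropy algebra is the kernel of the $\mc$-linear map $\mg\to\mg^*$, $v\mapsto -\mathrm{ad}^*(v)\xi=X_v(\xi)$, so $\mg_\xi$ is a complex subspace of $\mg$. Writing $v=v_1+iv_2$ with $v_j\in\mg_0$ and testing $\xi([v,w])=0$ against $w\in\mg_0$, and using that $\xi\in\mg^*$ is the $\mc$-linear extension of an element of $\mg_0^*$ (so $\xi([v_j,w])\in\mr$ for $v_j,w\in\mg_0$), I separate real and imaginary parts to obtain $\xi([v_1,w])=\xi([v_2,w])=0$ for every $w\in\mg_0$; that is, $v_1,v_2\in\mg_{0,\xi}$. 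The converse inclusion $\mg_{0,\xi}+i\mg_{0,\xi}\subset\mg_\xi$ is immediate from $\mc$-linearity, and directness follows from $\mg=\mg_0\oplus i\mg_0$.

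Statement (1) is then essentially a repackaging of this decomposition. Since $\m O_\xi\subset\mg^*$ is closed, Lemma \ref{lem:closed=embed} identifies $\m O_\xi$ with $G/G_\xi$, so $G_\xi$ is a (closed) complex Lie subgroup of $G$ with Lie algebra $\mg_\xi$. The isotropy group $G_{0,\xi}=G_0\cap G_\xi$ is a Lie subgroup of $G_\xi$ with Lie algebra $\mg_{0,\xi}$, so the decomposition above says precisely that $G_{0,\xi}$ is a real form of $G_\xi$ in the sense of the introduction.

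For (2), I use the $\mc$-linear identification $T_\xi\m O_\xi\cong\mg/\mg_\xi$ via $v\mapsto X_v(\xi)$; the corresponding identification $T_\xi\m O^\mr_\xi\cong\mg_0/\mg_{0,\xi}$ embeds the latter into the former. Under multiplication by $i$ in $T_\xi\m O_\xi$ (equivalently, by the complex structure $J$ coming from $\mg^*$), the image of $\mg_0$ maps to the image of $i\mg_0$. The decomposition from step (1) forces $T_\xi\m O^\mr_\xi\cap J T_\xi\m O^\mr_\xi=\{0\}$ and $T_\xi\m O^\mr_\xi+J T_\xi\m O^\mr_\xi=T_\xi\m O_\xi$, which is total reality of maximal dimension at $\xi$. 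To globalise, I invoke the fact that $G$ acts holomorphically on $\m O_\xi$ (hence so does $G_0$) and acts transitively on $\m O^\mr_\xi$: translating by $g_0\in G_0$ is a biholomorphism of $\m O_\xi$ sending $T_\xi\m O^\mr_\xi$ to $T_{g_0\cdot\xi}\m O^\mr_\xi$, so both properties propagate to every point.

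The main obstacle, and really the only one, is bookkeeping the interplay between the real Lie algebra $\mg_0$, its complexification $\mg$, and the $\mc$-linear extension of $\xi$; once the decomposition $\mg_\xi=\mg_{0,\xi}\oplus i\mg_{0,\xi}$ is established, both (1) and (2) follow routinely, and this is also what will be needed later to identify $(\m O_\xi,\omega,\tau)$ as a complexification of $(\m O^\mr_\xi,\omega_0)$.
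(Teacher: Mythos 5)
Your proof is correct and follows essentially the same route as the paper: both hinge on showing $\ker(v\mapsto X_v(\xi))$ splits as $\mg_{0,\xi}\oplus i\mg_{0,\xi}$ by separating real and imaginary parts of $\xi([v,w])$ for $w\in\mg_0$, and then reading off (1) and (2). You merely spell out more explicitly (via the tangent-space identification and $G_0$-homogeneity) the final implication that the paper states in one line, and the appeal to Lemma \ref{lem:closed=embed} in (1) is harmless but not needed.
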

\begin{proof}
Let $\sigma_0:G_0\ra \m O^{\mr}_\xi,\ \sigma:G\ra \m O_\xi$ be the orbit maps
given by $g\mapsto g\xi$ with differentials at the identity
$d\sigma_0:\mg_0\ra T_\xi \m O^{\mr}_\xi\subset \mg^*_0$ and $d\sigma:\mg\ra T_\xi\m O_\xi\subset\mg^*$.
We have $d\sigma_0(u)=X_u$ for $u\in\mg_0$ and $d\sigma(u)=X_u$ for $u\in\mg$.
For $u=a+ib\in\mg$ with $a,b\in\mg_0$, $X_u=0$ is equivalent to $\omega(X_u,X_v)(\xi)=\xi([u,v])=0$ for all $v\in\mg_0$.
But this is equivalent to $\xi([a,v])=\xi([b,v])=0$ for all $v\in\mg^0$.
Hence $a,b\in \ker d\sigma_0$ and $\ker d\sigma=\ker d\sigma_0\oplus i \ker d\sigma_0$
and so $G_{0,\xi}$ is a real form of $G_\xi$.
This implies that $\m O^{\mr}_\xi$ is a totally real submanifold of $\m O_\xi$ of maximal dimension.
\end{proof}

Let $\m O_\xi$ be as in the above lemma.
We now define an anti-holomorphic involution on $\m O_\xi$.
Let $\tau:\mg^*\ra \mg^*$ be the conjugation map
given by $\tau(a+ib)=a-ib,\ u, v\in \mg^*_0$.
Then $\tau$ is anti-holomorphic and is an involution, \emph{i.e.}, $\tau^2=Id$.
It is clear that $\tau(\m O_\xi)$ is also a complex submanifold of $\mg^*$.
Note that by Lemma \ref{lem:real orbit ttr} we have that
$\m O^\mr_\xi$ is a totally real submanifold
of both $\m O_\xi$ and $\tau(\m O_\xi)$ of maximal dimension,
by identity theorem for holomorphic functions we have $\m O_\xi=\tau(\m O_\xi)$,
and so $\tau$ is an anti-holomorphic involution on $\m O_\xi$.

We move forward to prove that $\m O^\mr_\xi$ is a closed submanifold of $\m O_\xi$
and hence a closed submanifold of $\mg^*_0$.
By Theorem 2.13 in \cite{Mon-Zip55} and noting that $\m O_\xi$ is closed in $\mg^*$,
it suffices to prove that $\m O^\mr_\xi$ is a closed subset of $\m O_\xi$.
Note that since a closed connected component of the set of fixed points of a compact Lie group acting
on a smooth manifold is a closed submanifold (see Theorem 5.1 in \cite{Kob95}),
any connected component of $\m O_\xi\cap\mg^*_0$, which is the fixed point set of $\tau$,
is a closed submanifold of $\m O_\xi$.
Note also that $\m O_\xi\cap\mg^*_0$ is a union of coadjoint orbits of $G_0$
and any orbit of $G_0$ contained in $\m O_\xi$ is a totally real submanifold of $\m O_\xi$ of maximal dimension by Lemma \ref{lem:real orbit ttr}, so
$\m O^\mr_\xi$ is closed in $\m O_\xi$ since its complement in $\m O_\xi$, which is a union of some orbits of $G_0$, is open in $\m O_\xi$.

Since $\omega|_{\m O^\mr_\xi}=\omega_0$ is real,
we have $\overline(\tau^*(\omega))=\omega$ on $\m O^\mr_\xi$.
By the identity theorem, we see that $\tau^*(\omega)=\bar\omega$.

\end{proof}

\section{The Density Property and Hamiltonian Carleman approximation}

By the considerations in the previous section, to prove Theorem \ref{thm:carleman} we may
consider the following framework.  We let $Z\subset\mathbb C^{N}$ be a closed connected complex
manifold equipped with a holomorphic symplectic form $\omega$.  Assume
that $Z_0$ is the union of some smooth connected components of $Z\cap\mathbb R^{N}$ with $\mathrm{dim}_{\mathbb R} Z^j_0=\mathrm{dim}_{\mathbb C}(Z)$
for each component $Z^j_0$,
and such that $\omega_0:=\omega|_{Z_0}$
is a real symplectic form on $Z_0$. Letting $u_j, j=1,...,N$, denote the coordinates on $\mathbb C^{N}$,
we assume further that for any linear function $u=\sum_{j=1}^N c_j\cdot u_j$ the vector field
$X_u$ defined on $Z$ by $\iota_{X_u}\omega=\partial u$ is complete.  \

We let $\mathcal V_h(Z)$ denote the Lie algebra of holomorphic Hamiltonian vector fields on $Z$, and
we let $\mathcal V_h^I(Z)$ denote the Lie sub-algebra of $\mathcal V_h(Z)$ generated by complete
vector fields.

\subsection{The density property for $\mathcal V_h(Z)$}

The following lemma follows easily from the assumptions above and standard arguments in Anders\'{e}n-Lempert theory.

\begin{lem}\label{lem:density}
In the setting above we have that $\mathcal V_h^I(Z)$ is dense in $\mathcal V_h(Z)$.
Moreover, if $A\subset\mathbb R^n$ is compact, and if $X_y$ is a continuous family
of smooth vector fields on $Z_0$ with
$X_y\in\mathcal V_h(Z_0)$ for all $y\in A$, then
if $\epsilon>0$, $r\in\mathbb N$, and if $K\subset Z_0$ is compact, there exists
a continuous family of complete holomorphic vector fields $Y_{y,1},...,Y_{y,n}\in\mathcal V_h^I(Z)$, all tangent to $Z_0$, such that
$$
\|X_y-\sum_{j=1}^n Y_{y,j}\|_{C^r(K)}<\epsilon.
$$
\end{lem}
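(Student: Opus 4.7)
The plan is to exhibit inside $\mathcal V_h^I(Z)$ enough complete holomorphic Hamiltonian vector fields to approximate every element of $\mathcal V_h(Z)$, using only the hypothesis that $X_u$ is complete for every linear function $u=\sum_j c_j u_j$. The key observation is that for any such $u$ and any entire function $\phi\colon\mathbb C\to\mathbb C$, the Hamiltonian vector field $X_{\phi(u)}=\phi'(u)\,X_u$ is again complete. Indeed, $\{\phi(u),u\}=\phi'(u)\{u,u\}=0$, so $u$ is a first integral of $X_{\phi(u)}$; along any integral curve $\xi(t)$ of $X_{\phi(u)}$ the scalar $\phi'(u(\xi_0))$ is constant, and the curve is therefore a constant time-rescaling of the integral curve of $X_u$ through $\xi_0$, which exists for all time. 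Hence $X_{\phi(u)}\in\mathcal V_h^I(Z)$.

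Next I would invoke the classical polarization (Waring) identity: decomposing a polynomial $P\in\mathbb C[u_1,\dots,u_N]$ into homogeneous components and applying Waring degree by degree, one writes $P=\sum_{k=1}^m c_k L_k^{d_k}$ with each $L_k$ a complex linear combination of the $u_j$'s and $c_k\in\mathbb C$; if the coefficients of $P$ are real, the $L_k$ and $c_k$ may be taken real. Consequently
$$X_P=\sum_{k=1}^m c_k\,X_{L_k^{d_k}}$$
is a finite linear combination of complete Hamiltonian vector fields of the type already produced, hence lies in the Lie subalgebra $\mathcal V_h^I(Z)$. Since $Z\subset\mathbb C^N$ is a closed complex submanifold, every $f\in\mathcal O(Z)$ extends to an entire function on $\mathbb C^N$ by Cartan's theorem, and entire functions are uniformly approximable on compacts by their Taylor polynomials; thus restrictions to $Z$ of polynomials in $u_1,\dots,u_N$ are dense in $\mathcal O(Z)$, which yields density of $\mathcal V_h^I(Z)$ in $\mathcal V_h(Z)$.

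For the parametric half I would first note that $Z_0$ is a totally real submanifold of $Z$ of maximal real dimension: from $Z_0\subset\mathbb R^N$ and $\dim_{\mathbb R}Z_0=\dim_{\mathbb C}Z$ one gets $T_pZ_0\cap iT_pZ_0\subset\mathbb R^N\cap i\mathbb R^N=\{0\}$. Given the continuous family $X_y$ on $Z_0$, I would choose a real-valued Hamiltonian $h_y$ for $X_y$ depending continuously on $y\in A$. By standard Baouendi-Treves / H\"ormander-Wermer approximation of smooth functions on totally real submanifolds of Stein manifolds, combined with Taylor approximation of the entire extensions, $h_y$ can be approximated in $C^r(K)$ by polynomials $P_y$ in $u_1,\dots,u_N$ with real coefficients, continuously in $y$ (a partition of unity over a finite cover of $A$ supplies the parametric continuity). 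Applying the polarization step to each $P_y$ then produces a continuous family of complete Hamiltonian vector fields $Y_{y,1},\dots,Y_{y,n}\in\mathcal V_h^I(Z)$ whose sum $C^r$-approximates $X_y$; since the $L_k$ and $c_k$ are real, each $Y_{y,j}=c_j X_{L_j^{d_j}}$ takes real values on $Z_0$ and is therefore tangent to $Z_0$.

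The main obstacle I anticipate is this parametric step: producing the polynomial approximations $P_y$ in the $C^r(K)$ norm with continuous dependence on $y\in A$. The unparametrized version is classical, and the parametric version follows from a finite partition of unity on the compact parameter set $A$, but the bookkeeping there is the least automatic ingredient. The other pieces -- the first-integral computation giving completeness of $X_{\phi(u)}$, the algebraic polarization identity, Cartan extension, and Taylor expansion of entire functions -- are essentially immediate.
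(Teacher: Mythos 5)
Your proposal is correct and follows essentially the same route as the paper's proof: reduce to real polynomial potentials, apply the Waring/polarization decomposition into powers of real linear forms $L_k$, get completeness of $X_{L_k^{d_k}}$ from the assumed completeness of $X_{L_k}$ via the first-integral/time-rescaling argument, and use reality of the data to obtain tangency to $Z_0$. The only minor divergences are that the paper produces the parametric polynomial approximation of the potentials by Weierstrass approximation in the real ambient variables (continuously in $y$) followed by the obvious complexification, rather than Baouendi--Treves/H\"ormander--Wermer plus Taylor expansion, and that the tangency step you summarize as ``takes real values on $Z_0$'' is justified in the paper by a short local argument in real holomorphic coordinates using that $\omega|_{Z_0}=\omega_0$ is real --- a point worth spelling out, since reality of the potential alone does not suffice without the reality of $\omega$ along $Z_0$.
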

\begin{proof}
We write $u_j=x_j+iy_j$.
We start with the case of $X_y \in \mathcal V_h(Z_0)$ for each fixed $y\in A$.  Let
$u_y$ be a family of potentials for $X_y$, continuous in $C^{r+1}$-norm in the $(x)$-variables
with respect to $y$. By Weierstrass' Approximation Theorem we may approximate $u$ arbitrarily
well in $C^{r+1}$-norm by polynomials $P$ in $x$ depending continuously on $y$, and so for the
purpose of approximating $X$ we will consider the vector fields $\tilde X_y$ defined on $Z_0$ by
$dP_y=\iota_{\tilde X_y}\omega_0$. First we let $P_y^o$ denote the polynomials $P_y$
extended in the obvious way to polynomials in the variables $u_j$, and we let $\tilde X_y^o$
denote the holomorphic vector fields defined on $Z$ by $\partial P_y^o=\iota_{{\tilde X}^o_y}\omega$.

We will first show that $\tilde X_y^o$ is tangent to $Z_0$. For any point $\zeta\in Z_0$, since
$Z_0$ is real analytic, there exists a real holomorphic embedding $g:U\rightarrow Z$
with $g(\mathbb R^{2n}\cap U)=Z_0\cap g(U)$, where $U$ is a neighbourhood of the origin
in $\mathbb C^{2n}$ and $g(0)=\zeta$. Setting $\widehat\omega=g^*\omega$ and
$\widehat P_y = g^* P^o_y$ it suffices to show that the vector fields $\widehat X_y$
defined by $\partial\widehat P_y =\iota_{\widehat X_y}\widehat\omega$ is tangent
to $\mathbb R^{2n}$. Note that $\widehat P_y$ and $\widehat\omega$ are real.
In particular
$$
\omega(z)=\sum_{i<j} a_{ij}(z)dz_i\wedge dz_j
$$
where all $a_{ij}$ are real holomorphic functions. Furthermore, we have that
$$
\alpha_y(z)=\partial\widehat P_y(z) = \sum_{j=1}^{2n} b_j(z)dz_j,
$$
with $b_j(z)$ real holomorphic functions, so it is straight forward to see that $\widehat X_y$
are all real holomorphic vector fields.

\medskip

%
%For this purpose we fix a point $z\in Z_0$
%and write
%$$
%\tilde X_y(z)=\sum_{j=1}^{2n} a_j\cdot\frac{\partial}{\partial x_j} \mbox{ and } \tilde X^C_y(z)=\sum_{j=1}^{2n} a_j\cdot\frac{\partial}{\partial z_j}.
%$$
%Then $\iota_{\tilde X_y(z)}\tilde\omega(v)=\iota_{\tilde X_y^C(z)}\omega(v)$ for all $v\in T_zZ_0$, and
%$dP_y(z)(v)=\partial P_y^o(z)(v)$, so we have that
%$$
%\iota_{\tilde X_y^C}\omega(z)(v) = \partial P_y^o(z)(v)
%$$
%for all $v\in T_zZ_0$, and since $\mathrm{dim}T_zZ_0=\mathrm{dim}T_zZ$ it follows
%that $\iota_{\tilde X_y^C}\omega(z) = \partial P_y^o(z)$.  \

We may now write
\begin{equation}
P^o_y(u)=\sum_{|\alpha|\leq N} a_{y,\alpha}u^\alpha,
\end{equation}
where the $a_{y,\alpha}$'s are real valued continuous functions on $A$.

It is a fundamental result in Anders\'{e}n-Lempert Theory that
\begin{equation}
u^\alpha =\sum_{k=1}^{M_\alpha} b^\alpha_k\cdot (c^\alpha_{k,1}\cdot u_1 + \cdot\cdot\cdot + c^\alpha_{k,N}\cdot u_N)^{|\alpha|},
\end{equation}
where all coefficients may be taken to be real. Hence, we have that $P_{y}(u)$ is a sum of real polynomials of the form
\begin{equation}
g^\alpha_{y,k}(u)=d^\alpha_{y,k}\cdot (c^\alpha_{k,1}\cdot u_1 + \cdot\cdot\cdot + c^\alpha_{k,N}\cdot u_N)^{|\alpha|}=d^\alpha_{y,k}\cdot (f^\alpha_{y,k}(u))^{|\alpha|},
\end{equation}
where by assumption $X_{f^\alpha_{y,k}}$ is complete on $Z$. Since we have that
$$
X_{(f^\alpha_{y,k})^{\alpha}} = |\alpha|(f^\alpha_{y,k})^{|\alpha|-1}X_{f^\alpha_{y,k}}
$$
we see that $X_{(f^\alpha_{y,k})^{\alpha}}$ is complete, and this concludes the proof of the lemma.
\end{proof}

\subsection{Convergence in $C^r$-norm}
For the lack of a suitable reference we will in this subsection include a result
on $C^r$-regularity of solutions of ODE's (the following lemma), and a result
on $C^r$-approximation by consistent algorithms (see Theorem \ref{thm:consistent convergenceCk} below).
We will need the results for vector fields on smooth manifolds, but since they are all local
in nature, we state and prove them in $\mathbb R^n$.

\begin{lem}\label{lem:flow convergence}
Let $D$ be an open set in $\mr^n$ and let $X^j, X:[0,1]\times D\ra\mr^n$ be smooth maps, $j\geq 1$.
We view $X^j_t:=X^j(t,\cdot)$ and $X_t=X(t,\cdot)$ as time dependent smooth vector fields on $D$.
Assume that $\phi^j, \phi:[0,1]\times D\rightarrow D$ are flows on $D$
generated by $X^j$ and $X$ with $\phi^j(0,x)=\phi(0,x)\equiv x$, namely
\begin{equation}\label{eqn:flow}
\frac{d\phi^j(t,x)}{dt}=X^j(t,\phi^j(t,x)),\ \frac{d\phi(t,x)}{dt}=X(t,\phi(t,x)).
\end{equation}
Then if $$\lim_{j\ra\infty}||X^j_t-X_t||_{C^r(K)}\ra 0$$
uniformly for all $t$ on any compact subset $K$ in $D$, then we have
$$\lim_{j\ra\infty}||\phi^j_t-\phi_t||_{C^r(K)}\ra 0$$
uniformly for all $t$ on any compact subset $K$ in $D$ (for which $\phi_t$ exists).
Here $r\geq 0$ is a fixed integer, $\phi^j_t=\phi^j(t,\cdot)$,
and $||f||_{C^r(K)}$ denotes the $C^r$-norm of $f$ on $K$,
i.e., the maximum of the $L^\infty$ norms of all partial derivatives of $f$
up to order $r$ on $K$ with respect to the variables $x$.
\end{lem}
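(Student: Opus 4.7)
The plan is to prove the lemma by induction on $r$, with the base case $r=0$ handled by a standard Gr\"onwall argument and the inductive step obtained by differentiating the flow equation to get a variational ODE for the higher-order derivatives of $\phi^j$ and $\phi$.

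First I would verify that the flows $\phi^j_t$ actually exist on the relevant domain for $j$ large. Fix a compact $K\subset D$ on which $\phi_t$ is defined for $t\in[0,1]$, and choose an open neighborhood $V$ of $\phi([0,1]\times K)$ with $\overline V\subset D$ compact. Since $X^j\to X$ uniformly in $C^0$ on $\overline V$ and $X$ is Lipschitz there, a short Gr\"onwall argument ensures that for $j$ large enough $\phi^j_t(K)\subset V$ for all $t\in[0,1]$. Then from
\[
\phi^j(t,x)-\phi(t,x)=\int_0^t\bigl[X^j_s(\phi^j(s,x))-X_s(\phi^j(s,x))\bigr]\,ds+\int_0^t\bigl[X_s(\phi^j(s,x))-X_s(\phi(s,x))\bigr]\,ds,
\]
estimating the first integrand by $\|X^j_s-X_s\|_{C^0(\overline V)}$ and the second by $\mathrm{Lip}(X_s)\cdot|\phi^j(s,x)-\phi(s,x)|$, Gr\"onwall's inequality gives the $r=0$ case.

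For the inductive step, assume the conclusion holds up to order $r-1$. Differentiating the flow equation $r$ times in $x$ and applying the chain rule (Fa\`a di Bruno) shows that $D^r_x\phi(t,x)$ satisfies a linear ODE of the form
\[
\frac{d}{dt}D^r_x\phi(t,x)=DX_t(\phi(t,x))\cdot D^r_x\phi(t,x)+R_r\bigl(t,x;\,\{D^\alpha X_t\}_{|\alpha|\le r},\,\{D^\beta_x\phi\}_{|\beta|\le r-1}\bigr),
\]
where $R_r$ is a universal polynomial expression in the listed arguments (evaluated at $\phi(t,x)$). The analogous equation holds for $\phi^j$ with $X_t$ replaced by $X^j_t$. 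Subtracting the two and using the inductive hypothesis, together with the assumed $C^r$-convergence $X^j\to X$ on $\overline V$, one obtains
\[
\frac{d}{dt}\bigl(D^r_x\phi^j-D^r_x\phi\bigr)=A^j(t,x)\bigl(D^r_x\phi^j-D^r_x\phi\bigr)+E^j(t,x),
\]
with $A^j$ uniformly bounded on $[0,1]\times K$ and $\|E^j\|_{C^0([0,1]\times K)}\to 0$. Since the initial condition at $t=0$ vanishes (both flows start as the identity), one final application of Gr\"onwall gives $\|D^r_x\phi^j-D^r_x\phi\|_{C^0(K)}\to 0$, completing the induction.

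The main obstacle is bookkeeping: one must keep track of the multivariable chain-rule expansion of $D^r_x(X_t\circ\phi)$ precisely enough to verify that the error term $E^j$ really does converge uniformly to zero under the inductive hypothesis and the $C^r$-convergence of $X^j$. The algebra is routine but tedious, and writing $R_r$ down explicitly (for instance via Fa\`a di Bruno's formula) is the cleanest way to make the argument rigorous. Once that expansion is in hand, every step reduces to the same linear Gr\"onwall estimate, with the needed neighborhood arguments to guarantee that all flows stay in a common compact set on which the approximations are controlled.
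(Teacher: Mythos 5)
Your proof is correct and the core strategy — induction on $r$, base case from a Gr\"onwall-type $C^0$ estimate, inductive step by differentiating the flow equation — is the same as the paper's. The implementation of the inductive step, however, is genuinely different in flavour. The paper differentiates the flow equation \emph{once}, notes that $\partial\phi/\partial x_i$ solves the first-order variational equation $\dot y = A(t,\phi(t,x))y$ (with the analogous equation for $\phi^j$), observes that the coefficient matrices $A^j(t,\phi^j(t,x))$ converge to $A(t,\phi(t,x))$ in $C^{r-1}$-norm, and then simply applies the induction hypothesis at level $r-1$ \emph{to this new ODE system}; this hides all the Fa\`a di Bruno bookkeeping inside the induction hypothesis and never produces the remainder term $R_r$ explicitly. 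You instead differentiate $r$ times up front, isolate the linear principal part $DX_t(\phi)\cdot D^r_x\phi$ with remainder $R_r$, and run a single Gr\"onwall estimate at the top order. Both arguments are sound: your version is more elementary (only Gr\"onwall, no recursive reuse of the lemma's statement) and makes the structure of the error term visible, at the cost of the combinatorial bookkeeping you flag; the paper's version is slicker and shorter, at the cost of a somewhat delicate appeal to the induction hypothesis for a parameter-dependent linear system on an augmented phase space. Your treatment of the domain issue (ensuring $\phi^j_t(K)\subset V$ for large $j$) is a detail the paper glosses over and is worth keeping.
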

\begin{proof}
It is a basic fact about differential equations (see e.g. Theorem 2.8 in \cite{Techl12}) that
the lemma holds in the case $r=0$ since $X_t$ and the $X^j_t$s are assumed to be smooth (in particular Lipschitz).
It is also a fact (however not as basic) that since $X_t$ (resp. $X^j_t$) is smooth, we have that $\phi(t,x)$ (resp. $\phi^j(t,x)$) is smooth (see e.g. Theorem 2.10 in  \cite{Techl12}).\

We will proceed by induction on $r$, and as induction hypothesis $(I_r)$ we will assume that the theorem holds for some $r-1$
with $r\geq 1$.  As just pointed out we have that $(I_1)$ holds. \

Letting $A(t,x)$ (resp. $A^j(t,x)$) denote
the Jacobian of $X_t$ (resp. $X^j_t$), and using the chain rule
and the equality of mixed partials in \eqref{eqn:flow}, we see that
$\frac{\partial\phi}{\partial x_i}(t,x)$ (resp. $\frac{\partial\phi^j}{\partial x_i}(t,x)$)
is a solution to the initial value problem (variational equation)
\begin{equation}
\overset{\cdot}{y} = A(t,\phi(t,x))\cdot y
\end{equation}
(resp. $\overset{\cdot}{y} = A^j(t,\phi^j(t,x))\cdot y$) with initial value $x_0=(0,...,1,...0)$ with
the $1$ at the ith spot. Now $A(t,\phi(t,x))$ (resp. $A^j(t,\phi^j(t,x))$) is smooth, and
since
$$
A^j(t,\phi^j(t,x))\rightarrow A(t,\phi(t,x))
$$
in the $(r-1)$-norm as $j\rightarrow\infty$, it follows by the induction hypothesis
that $\frac{\partial\phi^j}{\partial x_i}(t,x)\rightarrow \frac{\partial\phi}{\partial x_i}(t,x)$
in $(r-1)$-norm as $j\rightarrow\infty$. \
\end{proof}

\subsection{Consistent algorithms and $C^r$-norms}

Let $D\subset\mathbb R^n$ be a domain, let $A$ be a compact subset of $\mathbb R^m$, and let
$X(y,x):A\times D\rightarrow\mathbb R^n$ be a
smooth map, which for each fixed $y$ we interpret as a vector field on $D$. Let $\phi_{t,y}$ denote
the phase flow of $X_y$. A \emph{consistent algorithm} for $X$ is
a smooth map $\psi:I\times A\times D\rightarrow\mathbb R^n$ (here $I\subset\mathbb R$ is an unspecified interval
containing the origin) such that
\begin{equation}
\frac{d}{dt}|_{t=0}\psi_{y,t}(x)= X_y(x).
\end{equation}
The following is a basic result on approximation of flows by means of consistent
algorithms.
\begin{thm}\label{thm:consistent convergence}
With notation as above, let $\psi_{y,t}(x)$ be a consistent algorithm for $X$. Let
$K\subset D$ be a compact set, let $T>0$, and assume that the flow $\phi_{y,t}(x)$
exists for all $x\in K, y\in A$ and all $0\leq t\leq T$. Then for each $(t,x)\in I_T\times A\times K$
($I_T=[0,T]$) we have that $\psi_{y,{t/n}}^n(x)$ exists for all sufficiently large $n$.
Morover $\psi_{y,{t(y)/n}}^n(x)\rightarrow \phi_{y,t(y)}(x)$ uniformly as $n\rightarrow\infty$,
where $t:A\rightarrow I_T$ is a continuous function.
\end{thm}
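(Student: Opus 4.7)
The plan is to carry out the standard ``consistency plus stability implies convergence'' argument from numerical analysis, based on a discrete Gr\"{o}nwall bound. First, smoothness of $\psi$ together with the consistency identity gives the Taylor expansion
$$
\psi_{y,s}(x) = x + s\,X_y(x) + s^2 R(y,s,x),
$$
with $R$ jointly continuous and hence uniformly bounded on compacts. Comparing with the exact second order Taylor expansion of the flow
$$
\phi_{y,s}(x) = x + s\,X_y(x) + \tfrac{s^2}{2}(DX_y\cdot X_y)(x) + O(s^3),
$$
one obtains a uniform local truncation bound
$$
\sup_{y\in A,\; z\in K'} \|\psi_{y,s}(z) - \phi_{y,s}(z)\| \leq C s^2
$$
on any compact $K'\Subset D$ and $s$ in a small interval around $0$.

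Next I would fix a compact $K' \Subset D$ containing an open neighbourhood of the trajectory set $\Phi := \{\phi_{y,t}(x) : y \in A,\; 0\leq t \leq T,\; x \in K\}$, which is compact by continuity of the flow and compactness of $A\times I_T\times K$. Let $L$ be a common Lipschitz constant for the vector fields $X_y$ on $K'$. Writing $h = t(y)/n$, $x_k = \psi_{y,h}^k(x)$, $p_k = \phi_{y,kh}(x)$, and assuming inductively that $x_0,\dots,x_k \in K'$, the triangle inequality together with the one-step Lipschitz estimate for $\psi_{y,h}$ gives
$$
\|x_{k+1} - p_{k+1}\| \leq (1+Lh)\|x_k - p_k\| + C h^2.
$$
Unrolling this from $x_0 = p_0 = x$ produces the discrete Gr\"{o}nwall bound
$$
\|x_n - p_n\| \leq C h^2 \cdot \frac{(1+Lh)^n - 1}{Lh} \leq \frac{Ch}{L}\bigl(e^{LT} - 1\bigr),
$$
which tends to $0$ as $n\to\infty$, uniformly in $y \in A$ and $x \in K$.

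To close the induction I would choose $K'$ so that $\delta := \mathrm{dist}(\Phi,\partial K') > 0$. For $n$ large, depending only on $C$, $L$, $T$, $\delta$, the right-hand side of the Gr\"{o}nwall bound is strictly less than $\delta$, so the iterates $x_k$ automatically remain in $K'$ for every $0\leq k\leq n$. This simultaneously shows that $\psi^n_{y,h}(x)$ is defined for all sufficiently large $n$ and that the error estimate propagates all the way up to $k = n$, yielding the claimed uniform convergence.

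The main obstacle, for what is essentially a classical result, is purely one of uniformity: making the constants $C$ and $L$ and the neighbourhood $K'$ independent of $y \in A$, and making the threshold ``$n$ sufficiently large'' uniform in $(y,x) \in A \times K$. This uniformity is provided by compactness of $A$, $K$ and $I_T$, joint continuity of $X$, $DX$ and the remainder $R$, and continuity of the chosen time function $t : A \to I_T$ together with the bound $t(y) \leq T$; once these uniform constants are in place the argument is mechanical.
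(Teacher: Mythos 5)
Your argument is correct and is essentially the paper's proof: a uniform $O(s^2)$ local truncation estimate plus a one-step Lipschitz bound, propagated by a discrete Gr\"{o}nwall induction, with the compact buffer neighbourhood $K'$ closing the induction and guaranteeing the iterates are well defined for large $n$. The only cosmetic difference is the direction of the triangle-inequality split (you use the Lipschitz constant $(1+Lh)$ of the algorithm map $\psi_{y,h}$, the paper uses the bound $e^{Ch}$ for the exact flow), which changes nothing of substance.
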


In \cite{AbrahamMarsden}, Theorem 2.1.26, this was stated and proved without the uniformity in $(t,x)$,
and without the parameter space $A$. Although the proof in our case is exactly the same, we include it
for completeness.
\begin{proof}
Fix a constant $C>0$ such that the following holds.
\begin{itemize}
\item[(1)] $\|\psi_{y,t}(x) - \phi_{y,t}(x)\|\leq C\cdot t^2$, and
\item[(2)] $\|\phi_{y,t}(x)-\phi_{y,t}(x')\|\leq e^{C\cdot t}\cdot \|x-x'\|$,
\end{itemize}
for all $x,x'$ in an open neighbourhood $\Omega$ of the full $\phi(t,y,\cdot)$-orbit
of $K$, and for all $t$ sufficiently small. We let $\overline\Omega\subset D$.
Fix two points $x\in K, y\in A$, and for a fixed $n\in\mathbb N$ (large) we define
$$
x_k:=\psi_{y,t(y)/n}^{k}(x) \mbox{ and } y_k:=\phi_{y,k\cdot t(y)/n}(x)=\phi_{y,t(y)/n}^k.
$$
It is not a priori clear that $x_k$ is well defined even for large $n$, this will follow from the following.
Fix an initial $n\in\mathbb N$ such that $\psi_{y,T/n}$ is exists on $\Omega$ for
all $y\in A$. We claim that the following holds:

\begin{equation}\label{dist}
\|x_k-y_k\| \leq C\cdot (\frac{t(y)}{n})^2\cdot k\cdot e^{(k-1)\cdot C\cdot\frac{t(y)}{n}},
\end{equation}
for all $k\leq n$.
That this holds for $k=1$ follows immediately from (1), and to prove it for arbitrary $k\leq n$
we proceed by induction. By possibly having to increase $n$ we see from \eqref{dist} that $x_k\in\Omega$,
and so $x_{k+1}$ is well defined by the initial condition on $n$. We then get that
\begin{align*}
\|\psi_{y,t(y)/n}(x_k)-\phi_{y,t(y)/n}(y_k)\| & \leq \|\psi_{y,t(y)/n}(x_k)-\phi_{y,t(y)/n}(x_k)\|\\
&  + \|\phi_{y,t(y)/n}(x_k)-\phi_{y,t(y)/n}(y_k)\|\\
& \leq C\cdot(\frac{t(y)}{n})^2 + e^{C\cdot\frac{t(y)}{n}}\cdot C\cdot (\frac{t(y)}{n})^2\cdot k\cdot e^{(k-1)\cdot C\cdot\frac{t(y)}{n}}\\
& \leq C\cdot(\frac{t(y)}{n})^2(1 + k\cdot e^{k\cdot C\cdot\frac{t(y)}{n}})\\
& \leq C\cdot(\frac{t(y)}{n})^2\cdot(k+1)\cdot e^{k\cdot C\cdot\frac{t(y)}{n}}.\\
\end{align*}
This finishes the induction step, and we see that for sufficiently large $n$ we have that $x_k$
is well defined for $k\leq n$ independently of $x\in D$ and $y\in A$, and we have that
$\|x_k-y_k\|\leq C\cdot(\frac{T}{n})^2\cdot (k+1)\cdot e^{k\cdot C\cdot\frac{T}{n}}$.
\end{proof}

We will need the following generalisation of Theorem \ref{thm:consistent convergence}
which was proved in \cite{Schar}.

\begin{thm}\label{thm:consistent convergenceCk}
With the setup as in Theorem \ref{thm:consistent convergence} we have that
$\psi_{y,{t/n}}^n(x)\rightarrow \phi_t(x)$ uniformly in $C^r$-norm with respect to
the variables $(x)$on $I_T\times A\times K$ as $n\rightarrow\infty$,
for any fixed $r\in\mathbb N$.
\end{thm}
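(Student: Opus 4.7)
My plan is to argue by induction on $r$, with the base case $r=0$ being Theorem \ref{thm:consistent convergence} itself. For the inductive step, I assume the result for $r-1$ and upgrade to $C^r$; for this it suffices to show that the first derivatives $D_x(\psi_{y,t/n}^n)(x)$ converge in $C^{r-1}$-norm to $D_x\phi_{y,t}(x)$ uniformly in $(t,y,x)\in I_T\times A\times K$, since then combining with the base case yields $C^r$-convergence.

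The key idea is to encode the first derivative as the top component of a lifted phase flow, reducing the $C^r$ statement for $(X,\psi)$ to a $C^{r-1}$ statement for a new system. Set $A_y(x) := D_x X_y(x)$ and, on the product $D\times\mathbb R^{n\times n}$, define the lifted vector field $\tilde X_y(x,M) := (X_y(x),\, A_y(x)\cdot M)$ and the lifted algorithm $\tilde\psi_{y,t}(x,M) := (\psi_{y,t}(x),\, D_x\psi_{y,t}(x)\cdot M)$. By the chain rule this setup is compatible with iteration,
$$
\tilde\psi_{y,t/n}^{\,k}(x,M) \;=\; \bigl(\psi_{y,t/n}^{\,k}(x),\, D_x(\psi_{y,t/n}^{\,k})(x)\cdot M\bigr),
$$
and the phase flow of $\tilde X_y$ is $\tilde\phi_{y,t}(x,M)=(\phi_{y,t}(x),\,D_x\phi_{y,t}(x)\cdot M)$, the second coordinate being the fundamental matrix of the variational equation along the $\phi$-orbit. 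Consistency of $\tilde\psi$ with $\tilde X$ follows by differentiating the relation $\tfrac{d}{dt}|_{t=0}\psi_{y,t}(x)=X_y(x)$ in $x$, using smoothness of $\psi$ to exchange orders and obtaining $\tfrac{d}{dt}|_{t=0}D_x\psi_{y,t}(x)=A_y(x)$.

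With this setup, I would apply the inductive hypothesis to $(\tilde X,\tilde\psi)$ with compact set $\tilde K := K\times\{I\}$ and the same parameter space $A$, noting that linearity of the variational equation in $M$ ensures $\tilde\phi_{y,t}$ exists for all $(t,y)\in I_T\times A$ on any relatively compact neighbourhood of the full $\tilde\phi$-orbit of $\tilde K$. The inductive hypothesis then yields $\tilde\psi_{y,t/n}^{\,n}(x,I)\to\tilde\phi_{y,t}(x,I)$ in $C^{r-1}$-norm in $x$, uniformly in $(t,y)$; reading off the matrix component gives exactly $D_x(\psi_{y,t/n}^{\,n})\to D_x\phi_{y,t}$ in $C^{r-1}$, closing the induction. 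The main obstacle I anticipate is the bookkeeping of compact sets under the lifting: one must verify that the approximate iterates $\tilde\psi_{y,t/n}^{\,k}(x,I)$ remain inside a fixed compact subset of $D\times\mathbb R^{n\times n}$ on which the inductive uniformity applies. This can be arranged by enlarging $\tilde K$ to a closed neighbourhood of the full $\tilde\phi$-orbit of $K\times\{I\}$, exactly in the spirit of the $\Omega$-neighbourhood used in the proof of Theorem \ref{thm:consistent convergence}.
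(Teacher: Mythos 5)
Your proposal is correct and follows essentially the same approach as the paper: lift to the Jacobian system $\tilde X_y(x,M)=(X_y(x),D_xX_y(x)\cdot M)$ with lifted consistent algorithm $\tilde\psi_{y,t}(x,M)=(\psi_{y,t}(x),D_x\psi_{y,t}(x)\cdot M)$, check that iteration of the lifted algorithm reproduces the Jacobian of the iterated algorithm via the chain rule, and apply the convergence result one level down. The only cosmetic difference is that you structure the whole argument as an induction on $r$ from the outset, whereas the paper spells out the $C^0\to C^1$ step and then asserts the general case follows inductively; the content is the same.
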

Since the proof of this theorem was given in the unpublished diplomarbeit \cite{Schar} we will include it here.

\begin{proof}
We start by considering the $C^1$-norm. For this we define a smooth vector
field on $\mathbb R^n\times\mathbb R^{n^2}$ by
\begin{equation}
Y_y(x,z) = (X(x),\frac{\partial X_y}{\partial x}(x)z).
\end{equation}
We claim first that
\begin{equation}
\Phi_y(t,x) = (\phi_y(t,x),\frac{\partial\phi_y}{\partial x}(t,x))
\end{equation}
is the phase flow of $Y$ with initial value $(x,\mathrm{Id})$. This follows by
using the chain rule to see that
\begin{equation}
\frac{d}{dt}\frac{\partial\phi_y}{\partial x}(t,x) = \frac{\partial}{\partial x}\frac{d\phi_y}{dt}(t,x)
=\frac{\partial (X_y\circ\phi_y)}{\partial x}(t,x) = \frac{\partial X_y}{\partial x}(\phi_y(t,x))\frac{\partial\phi_y}{\partial x}(t,x).
\end{equation}

\

The next step is to write down a convenient consistent algorithm for $Y_y$. We have that
\begin{equation}
\frac{d}{dt}|_{t=0}\frac{\partial\psi_{y,t}(x)}{\partial x} = \frac{\partial}{\partial x}\frac{d}{dt}|_{t=0}\psi_{y,t}(x) = \frac{\partial X_y}{\partial x}(x).
\end{equation}
Therefore, the map
\begin{equation}
\Psi_y(t,x,y)=(\psi_y(t,x),\frac{\partial\psi_y}{\partial x}(t,x)z)
\end{equation}
is a consistent algorithm for the vector field $Y_y$, and so we have that
$$
\lim_{n\rightarrow\infty}\Psi_{y,t/n}^n \rightarrow \Phi_{y,t}
$$
uniformly as $n\rightarrow\infty$.  \

We will now show that the second component of $\Psi_{y,t/n}^n(x,\mathrm{Id})$
is equal to $\frac{\partial\psi^n_{y,t/n}}{\partial x}(x)$,
from which it will follow that
$$
\frac{\partial\psi^n_{y,t/n}}{\partial x}(x)\rightarrow\frac{\partial\phi_y}{\partial x}(t,x)
$$
uniformly as $n\rightarrow\infty$ (by convergence of consistent algorithms in $C^0$-norm).
We will show this by induction.

\medskip

Note first that
$$
\Psi_{y,t/n}(x,y) = (\psi_y(t/n,x),\frac{\partial\psi_y}{\partial x}(t/n,x)y),
$$
so if we set $y=\mathrm{Id}$, we see that the second
component of $\Psi_{y,t/n}^1(x,\mathrm{Id})$
is equal to $\frac{\partial\psi^1_{y,t/n}}{\partial x}(x)$. As our
induction hypothesis $I_m$ ($1\leq m<n$) we now assume that
the second component of $\Psi_{y,t/n}^m(x,\mathrm{Id})$
is equal to $\frac{\partial\psi^m_{y,t/n}}{\partial x}(x)$ (as we have seen $I_1$ holds).
We then have that
$$
\Psi^m_{y,t/n}(x,\mathrm{Id})=(\psi^m_{y,t/n}(x),\frac{\partial\psi^m_{y,t/n}}{\partial x}(x).
$$
We get that
$$
\Psi^{m+1}_{y,t,n}(x,\mathrm{Id})=(\psi^{m+1}_{y,t/n}(x),\frac{\partial\psi_{y,t/n}}{\partial x}(\psi^{m}_{y,t/n}(x))\frac{\partial\psi^m_{y,t/n}}{\partial x}(x)).
$$
And on the other hand, by the chain rule we have that
$$
\frac{\partial}{\partial x}\psi_{y,t/n}(\psi^m_{y,t/n}(x)) = \frac{\partial\psi_{y,t/n}}{\partial x}(\psi^n_{y,t/n}(x))\frac{\partial\psi^m_{y,t/n}}{\partial x}(x)).
$$
This completes the proof of convergence in $C^1$-norm, and the general case follows by induction on $r$.

\end{proof}

For the following corollary we introduce some notation. Let $X:I\times D\rightarrow\mathbb R^n$
be a smooth map which we interpret as a time dependent vector field on a domain $D\subset\mathbb R^n$.
Fix an $n\in\mathbb N$. For each $j=0,1,2,...,n-1$ and $t\in I_T$ we let
$X^{jt/n}(x)$ denote the time independent vector field $X_{(tj)/n}(x)$, and we let $\psi^{jt/n}_{s}$ denote its flow.
Finally, on any compact set $K\subset D$ and any for all $t$ where the following is well defined, we set
\begin{equation}\label{comp}
\psi^n_{t}:=\psi^{(n-1)t/n}_{t/n}\circ \psi^{(n-2)t/n}_{t/n}\circ\cdot\cdot\cdot \circ\psi^{t/n}_{t/n}\circ \psi^0_{t/n}.
\end{equation}

We have the following corollary to Theorem \ref{thm:consistent convergenceCk}.
\begin{cor}\label{cor:freezetime}
Let $X:I\times D\rightarrow\mathbb R^n$
be a smooth map which we interpret as a time dependent vector field on a domain $D\subset\mathbb R^n$,
and let $\phi_t$ denote its phase flow. Let $K\subset D$ be a compact set and let $I_T$ be an interval
such that $\phi_{t}(x)$ exists on $I_T\times K$. Then $\psi^n_{t}(x)\rightarrow \phi_{t}(x)$
uniformly and in $C^k$-norm in the variables $(x)$ on $I_T\times K$ as $n\rightarrow\infty$ (where $\psi^n_t$ is defined as in \eqref{comp}).
\end{cor}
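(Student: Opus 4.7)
The plan is to reduce Corollary \ref{cor:freezetime} to Theorem \ref{thm:consistent convergenceCk} by the standard autonomization trick: lift the time-dependent field $X$ to an autonomous field on an extended phase space in which the freezing composition \eqref{comp} appears as the iterate of a genuine consistent algorithm. I would set $\tilde D := I\times D \subset \mr^{n+1}$ with coordinates $(s,x)$ and introduce the autonomous field $\tilde X(s,x) := (1,\,X(s,x))$, whose phase flow satisfies $\tilde\phi_t(0,x)=(t,\phi_t(x))$. Thus convergence of $\psi^n_t(x)$ to $\phi_t(x)$ in $C^r$-norm uniformly on $I_T\times K$ will follow from convergence in the $x$-coordinate of a suitable consistent algorithm for $\tilde X$ along the slice $\{0\}\times K$.

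For the algorithm I would take
$$
\tilde\Psi_t(s,x) := \bigl(s+t,\ \psi^{s}_{t}(x)\bigr),
$$
where $\psi^s_t$ is the time-$t$ flow of the time-frozen autonomous field $X^s(x):=X(s,x)$. Smoothness of $\tilde\Psi$ in all variables is immediate from smooth dependence of ODE solutions on parameters, and the consistency identity $\frac{d}{dt}|_{t=0}\tilde\Psi_t(s,x) = (1,X(s,x)) = \tilde X(s,x)$ is a direct computation.

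The remaining bookkeeping is a short induction on $k$ showing that the $s$-coordinate of $\tilde\Psi_{t/n}^{k}(0,x)$ is exactly $kt/n$, so that on the next iteration the frozen field is automatically $X^{kt/n}$; explicitly,
$$
\tilde\Psi_{t/n}^{k}(0,x) \,=\, \Bigl(\tfrac{kt}{n},\ \psi^{(k-1)t/n}_{t/n}\circ\cdots\circ \psi^{t/n}_{t/n}\circ \psi^{0}_{t/n}(x)\Bigr).
$$
At $k=n$ this gives $(t,\psi^n_t(x))$ with $\psi^n_t$ as in \eqref{comp}. Theorem \ref{thm:consistent convergenceCk}, applied with trivial parameter space, then yields uniform $C^r$-convergence $\tilde\Psi_{t/n}^{n}(0,x)\to\tilde\phi_t(0,x)$ in the $x$-variables on $I_T\times K$, and extracting the second coordinate completes the proof.

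The only potentially delicate point is matching domains: one must verify that the full $\tilde\phi_t$-orbit of $\{0\}\times K$ over $I_T$ lies in an open neighborhood $\tilde\Omega\subset \tilde D$ on which $\tilde\Psi_{t/n}$ is well defined for all sufficiently large $n$. This is transparent from the hypothesis that $\phi_t(x)$ exists on $I_T\times K$ together with shrinking $\tilde \Omega$ appropriately, so no analytic obstacle arises beyond what is already absorbed into Theorem \ref{thm:consistent convergenceCk}; the entire content of the corollary is in recognizing that \eqref{comp} is exactly the iterated consistent algorithm for the autonomized field.
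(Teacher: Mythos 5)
Your proposal is correct and is essentially the paper's own argument: the paper likewise autonomizes via $Y(t,x)=(1,X(t,x))$, uses the consistent algorithm $\Psi_s(t,x)=(t+s,\psi^t_s(x))$, identifies the iterates with \eqref{comp} by the same induction, and invokes Theorem \ref{thm:consistent convergenceCk}. No substantive difference in approach.
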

\begin{proof}
We consider the time independent vector field $Y$ on the orbit $(t,\phi_{t}(D)$ by
$$
Y(t,x):=(1,X(t,x)).
$$
Then the flow of $Y$ is given by
$$
\Phi_{s}(t,x):=(t+s,\phi_{t+s}(\phi_{t}^{-1}(x))).
$$
Now for each $t$ we let $\psi^t_{s}$ denote the flow of the time independent vector field $X_{t}$
where $t$ is fixed, and we define
$$
\Psi_{s}(t,x)=(t+s,\psi^t_{s}(x)).
$$
Then $\Psi_{s}$ is a consistent algorithm for $Y$, and so $\Psi_{s/n}^n\rightarrow \Phi_s$
uniformly on $I\times K$ in $C^k$-norm as $n\rightarrow\infty$. Now starting from $t=0$ and
$x\in K$ it is easy to see by induction on $j$ that
\begin{equation}
\Psi^j_{s/n}(x)=(js/n,\psi^{(j-1)s/n}_{s/n}\circ\cdot\cdot\cdot\circ\psi^0_{s/n}(x)),
\end{equation}
for $j\leq n$, and for $j=n$ we see that the second component gives \eqref{comp} after substituting $t$ for $s$.
\end{proof}

\subsection{An Anders\'{e}n-Lempert type Theorem}
With the setup from the beginning of this section, we now prove a theorem that will be the key ingredient in the proof
of Theorem \ref{thm:carleman}.

\begin{thm}\label{thm:al}
Assume that all connected components of $Z_0$ are simply connected and, and let $\psi:[0,1]\times Z_0\rightarrow Z_0$ be a smooth map such that $\psi_t\in\mathrm{Symp}(Z_0,\omega_0)$
for each $t\in [0,1]$. Assume further that $\psi_0=\mathrm{id}$, and that there exists an $R\geq 0$ such that
$\psi_t(x)=x$ for all $\|x\|\leq R$, for all $t\in [0,1]$. Then for any $r\in\mathbb N$ and $a>0, 0<b<R$, there exists a sequence $\Psi_{j,t}:Z\rightarrow  Z$
with $\Psi_{j,t}\in\mathrm{Symp}(Z,\omega)$ for all $j\in\mathbb N, t\in [0,1]$, and with $\Psi_{j,t}$ $Z_0$-invariant, such that
\begin{equation}
\lim_{j\rightarrow\infty} \|\Psi_{j,t}-\psi_t\|_{C^r(a\overline{\mathbb B_{\mathbb R}^{N}}\cap Z_0)} = 0
\end{equation}
and
\begin{equation}
\lim_{j\rightarrow\infty} \|\Psi_{j,t}-\mathrm{id}\|_{C^r(b\overline{\mathbb B^{N}}\cap Z)} = 0
\end{equation}
uniformly in $t$.
\end{thm}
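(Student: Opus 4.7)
The plan is to build $\Psi_{j,t}$ as a composition of many small-time flows of complete holomorphic Hamiltonian vector fields on $Z$, by chaining the three tools developed earlier in this section: the freeze-time Corollary~\ref{cor:freezetime}, the Anders\'en--Lempert density Lemma~\ref{lem:density}, and the $C^r$-convergence results for flows and consistent algorithms (Lemma~\ref{lem:flow convergence} and Theorem~\ref{thm:consistent convergenceCk}). Since each connected component of $Z_0$ is simply connected and $\psi_0=\mathrm{id}$, I would first write $\psi_t$ as the Hamiltonian flow of a smooth family $h_t\colon Z_0\to\mathbb R$. The hypothesis that $\psi_t\equiv\mathrm{id}$ on $\{\|x\|\leq R\}\cap Z_0$ forces $X_{h_t}$ to vanish there; adjusting $h_t$ by a locally constant on each relevant component, I may arrange $h_t\equiv 0$ on $R\overline{\mathbb B_{\mathbb R}^{N}}\cap Z_0$.

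Next I would assemble the approximation in three layers. Corollary~\ref{cor:freezetime} reduces $\psi_t$ in $C^r$ to a composition $\psi^{(n-1)t/n}_{t/n}\circ\cdots\circ\psi^{0}_{t/n}$ of flows of the frozen Hamiltonians $h_{jt/n}$. For each frozen time $s\in[0,1]$, Lemma~\ref{lem:density} with parameter $s$ and compact $K=a\overline{\mathbb B_{\mathbb R}^{N}}\cap Z_0$ produces a continuous family of complete holomorphic $Z_0$-invariant Hamiltonian vector fields $Y_{s,1},\ldots,Y_{s,m}$ on $Z$ whose sum approximates $X_{h_s}$ in $C^r(K)$. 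Lemma~\ref{lem:flow convergence} then gives $C^r$-convergence of the corresponding flows on $Z_0$, and Theorem~\ref{thm:consistent convergenceCk}, applied to the Lie--Trotter consistent algorithm $\phi^{Y_{s,m}}_\tau\circ\cdots\circ\phi^{Y_{s,1}}_\tau$ for the vector field $\sum_k Y_{s,k}$, approximates that flow in $C^r$ by a composition of complete holomorphic flows on $Z$. Concatenating the three layers, $\Psi_{j,t}$ is a composition of many small-time flows of complete holomorphic $Z_0$-invariant Hamiltonian vector fields, so $\Psi_{j,t}\in\mathrm{Symp}(Z,\omega)$ and leaves $Z_0$ invariant, yielding the first conclusion.

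The main obstacle is the second conclusion: convergence to the identity in $C^r$-norm on the complex ball $b\overline{\mathbb B^{N}}\cap Z$ with $b<R$. Restricting the first conclusion to the sub-compact $R\overline{\mathbb B_{\mathbb R}^{N}}\cap Z_0$, where $\psi_t\equiv\mathrm{id}$, already gives $\Psi_{j,t}\to\mathrm{id}$ in $C^r$ on a totally real submanifold of maximal dimension in $Z$. To upgrade this real convergence to convergence on a complex neighborhood, I plan to use a two-constants/Phragm\'en--Lindel\"of estimate for the holomorphic maps $\Psi_{j,t}$ combined with Cauchy estimates for holomorphic derivatives; this yields the desired $C^r$-convergence on $b\overline{\mathbb B^{N}}\cap Z$ provided the family $\{\Psi_{j,t}\}$ is uniformly bounded on a slightly larger complex set. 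Securing this a priori complex bound is the delicate part: the polynomial potentials $P^{(j)}_s$ produced in Lemma~\ref{lem:density} satisfy $\|P^{(j)}_s\|_{C^{r+1}(R\overline{\mathbb B_{\mathbb R}^{N}})}\to 0$ since $h_s\equiv 0$ there, yet their holomorphic extensions can blow up on $R\overline{\mathbb B^{N}}\cap Z$ through a Bernstein-type factor of order $(R/(R-b))^{\deg P^{(j)}_s}$. I would compensate by letting the polynomial degrees grow slowly, using Jackson-type estimates for smooth $h_s$ and the strict gap $b<R$ to balance the Bernstein blow-up against the $C^{r+1}$-approximation error, and then verifying that this control propagates through the flow and consistent-algorithm approximations. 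Carrying out this trade-off is where I expect the hard work to lie.
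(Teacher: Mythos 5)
Your first two paragraphs match the paper's Steps 1 and 3 closely: write $\psi_t$ as a Hamiltonian flow, freeze time via Corollary~\ref{cor:freezetime}, approximate each frozen field through Lemma~\ref{lem:density}, and pass to flows via Lemma~\ref{lem:flow convergence} and Theorem~\ref{thm:consistent convergenceCk}. That part of your outline is sound.

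The gap is in how you handle the second conclusion. You propose to first get $\Psi_{j,t}\to\mathrm{id}$ in $C^r$ on the \emph{real} set $R\overline{\mathbb B^N_{\mathbb R}}\cap Z_0$ and then upgrade to the complex ball by a Phragm\'en--Lindel\"of/Cauchy-estimate argument, controlling a Bernstein-type blow-up $(R/(R-b))^{\deg P}$ by slowing the growth of the polynomial degrees. This trade-off does not close. The issue is structural: the potential $h_s$ is smooth but not real-analytic, and any sequence of polynomials converging to $h_s$ uniformly on a real compact set with $h_s\equiv 0$ on a real subball \emph{cannot} stay uniformly bounded on any complex neighbourhood of that subball unless $h_s$ extended holomorphically past it --- a Bernstein--Walsh obstruction, not merely a quantitative nuisance. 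Degree control against Jackson rates pits geometric growth against polynomial decay, and geometric wins; and in any case nothing in Lemma~\ref{lem:density} constrains the approximants off $Z_0$. So the a priori complex bound you flag as ``the delicate part'' is in fact unavailable by this route.

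The paper's Step~2 avoids the problem entirely by constraining the approximants on the complex ball from the outset. Since $\psi_t\equiv\mathrm{id}$ for $\|x\|\leq R$, the potential $P_{j,t}$ can be chosen identically zero on $R\overline{\mathbb B^N_{\mathbb R}}\cap Z_0$ and then \emph{extended by zero} to a function on $R\mathbb B^N_{\mathbb C}\cup\mathbb R^N$. Because $R\mathbb B^N_{\mathbb C}\cup\mathbb R^N$ is polynomially convex, one can run Oka--Weil approximation \emph{simultaneously} on $K$ (approximating $P_{j,t}$) and on $R\mathbb B^N_{\mathbb C}$ (approximating $0$), then symmetrize $Q_{j,t}\mapsto\tfrac12(Q_{j,t}(u)+\overline{Q_{j,t}(\bar u)})$ to make the polynomial real and hence $Z_0$-tangent. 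The resulting vector fields are small on the full complex ball by construction, and the rest of Steps 1 and 3 goes through as you described. You should replace your Phragm\'en--Lindel\"of/Bernstein scheme with this simultaneous polynomial approximation on the polynomially convex union.
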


\begin{proof}
We may assume that $a>b$. Let $K\subset Z_0$ be a compact set containing the entire $\psi_t$-orbit of
$a\overline{\mathbb B_{\mathbb R}^{N}}\cap Z_0$ in its interior.  \

{\bf Step 1:} Define the time dependent vector field
$$
X_t(\psi_t(x)):=\frac{d}{dt}\psi_t(x).
$$
Then $\psi_t$ is the phase flow of $X$. Since $\psi_t(x)=x$ for all $\|x\|\leq R$ and $t\in [0,1]$
we may extend $X_t$ to be identically zero on an open neighbourhood of $b\overline{\mathbb B^{2n}}$.
Consider $\psi^n_t$ as in \eqref{comp}. By using Cartan's formula one sees that each $X_{jt/n}$
is Hamiltonian, hence each $\psi^{jt/n}_{s}$ is symplectic. By Corollary \ref{cor:freezetime} we have that
$\psi^n_t\rightarrow\psi_t$ uniformly in $C^r$-norm with respect to the variables $(x)$ uniformly in $t$.
Hence, fixing a large enough $n$ it remains to prove that each $t$-parameter family of flows $\psi^{jt/n}_s$,
is approximable in $C^r$-norm in the variables $(x)$ by a $t$-parameter family of flows $\tilde\psi^{j,t}_s$
uniformly in $(t,s)$, where $\tilde\psi^{j,t}_s\in \mathrm{Symp}(Z,\omega)$ for each fixed $t$, and leaving
$Z_0$ invariant.  By Lemma \ref{lem:flow convergence} it suffices to approximate the $t$-parameter
family $X_{jt/n}$ uniformly in $C^r$-norm in the variables $(x)$ uniformly in $t$, by complete holomorphic vector fields
which are tangent to $Z_0$. \

{\bf Step 2:} \

The approximation of the family $X_{jt/n}$ on $K$ is now immediate from Lemma \ref{lem:density}, we
just have to take some extra care to achieve that the approximation is uniformly close to the identity on
$b\overline{\mathbb B^{N}}\cap Z$. Let $P_{j,t}$ denote the real potential for $X_{jt/n}$ on $Z_0$ extended
to a smooth function on $\mathbb R^N$. Then $P_{j,t}$ may be chosen to be zero for $\|x\|\leq R$, and so
$P_{j,t}$ may be extended to be zero on $R\cdot\mathbb B^N_{\mathbb C}$. Hence (by possibly having to decrease $R$
slightly) since $R\cdot\mathbb B^N_{\mathbb C}\cup\mathbb R^N$ is polynomially convex, we may
approximate $P_{j,t}$ to arbitrary precision on $R\cdot\mathbb B^N_{\mathbb C}\cup K$ by a
parameter family $Q_{j,t}$ of holomorphic polynomials, and by setting
$\tilde Q_{j,t}(u):=\frac{1}{2}(Q_{j,t}(u)+\overline{Q_{j,t}(\overline u)})$ we obtain a real
holomorphic polynomial approximating $Q_{j,t}$, and we get that $X_{\tilde Q_{j,t}}$
approximates $X_{jt/n}$ on $K$ and the identity on $R\cdot\mathbb B^N_{\mathbb C}$.
Following the proof of Lemma \ref{lem:density}
we obtain complete Hamiltonian vector fields $Y_{t,k}$ on $Z$ such that
$\sum_{k=1}^M Y_{t,k}=X_{\tilde Q_{j,t}}$.  \

{\bf Step 3:} \

Let $\sigma^{t,k}_s$ denote the flow of $Y_{t,k}$ for $k=1,...,M$, and set
\begin{equation}
\Psi^{t,j}_s:= \sigma^{t,M}_s\circ\cdot\cdot\cdot\circ \sigma^{t,1}_s.
\end{equation}
Then $\Psi^{t,j}_s$ is a consistent algorithm for $X_{\tilde Q_{j,t}}$, and so by Theorem \ref{thm:consistent convergenceCk}
we have that $(\Psi^{t,j}_{t/nm})^m\rightarrow \psi^{jt/n}_{t/n}$ as $m\rightarrow\infty$.

\end{proof}

\subsection{Proof of Theorem \ref{thm:carleman}}

As explained in the previous section, we may prove the Carleman approximation in
the context presented in the beginning of this section, \emph{i.e.}, for the pair $(Z,Z_0)$.
By assumption we have that $\phi$ is smoothly isotopic to the identity map, and
so there exists a smooth isotopy $\psi_t, t\in [0,1]$, of symplectomorphisms of $Z_0$
such that $\psi_0=\mathrm{id}$ and $\psi_1=\phi$.
For $R>0$ we set $Z_R=R\cdot\overline{\mathbb B^N}\cap Z$ and
$Z_{0,R}=R\cdot\overline{\mathbb B^N}\cap Z_0$.  We will construct the approximating automorphism inductively. \

Assume that we have constructed $\phi_j\in\mathrm{Symp}(Z,\Omega)$ leaving $Z_0$ invariant,
constructed an isotopy $\psi^j_t$ with $\psi^j_t\in\mathrm{Ham}(Z_0,\omega)$ for
each $t\in [0,1]$, and found $R^j_1, R^j_{2}\in\mathbb N$ with $R^j_1\geq j, R^j_{2}\geq R^j_1+1$,
such that the following hold:
\begin{itemize}
\item[($1_j$)] $\phi_j(R^j_1\cdot\overline{\mathbb B^N})\subset R^j_{2}\cdot\overline{\mathbb B^N}$,
\item[($2_j$)] $\|\phi_j-\phi_{j-1}\|_{R^{j-1}_1\overline{\mathbb B^N}}<\epsilon_j$ (for $j\geq 2$),
\item[($3_j$)] $\psi^j_0=\mathrm{id}$,
\item[($4_j$)] $\psi^j_t(x)=x$ for $\|x\|\leq R^j_{2}+\epsilon_j$ for all $t\in [0,1]$, and
\item[($5_j$)] $\|\psi^j_1\circ\phi_j-\phi\|_{C^r(x)}<\epsilon(x)$ for all $x\in Z_0$.
\end{itemize}

The inductive step is the following
\begin{equation}\label{istep}
\mbox{We may achieve ($1_{j+1}$)--($5_{j+1}$) with $\epsilon_{j+1}$ arbitrarily small.}
\end{equation}

Set $R^{j+1}_1=R^j_2+1$ and fix $S_1$ such that $Z_{0,S_1}$ contains the entire
$\psi^j_t$-orbit and $(\psi^j_t)^{-1}$-orbit of $Z_{0,R^{j+1}_1+1}$ in its interior.
Fix a cutoff function $\chi(z)$ such that $\chi(z)=0$ for all $\|z\|\leq S_1$ which
is identically equal to $1$ for $\|z\|\geq S_1+1$.
By Theorem \ref{thm:al}
we may approximate $\psi^j_t$ to arbitrary precision near $Z_{0,S_1+2}$
by $Z_0$-invariant symplectomorphisms $\Psi^j_t$, which also approximates the identity
to arbitrary precision near $R^j_2\cdot\overline{\mathbb B^N}$.  Next we set
\begin{equation}
\sigma^j_t = (\Psi^j_t)^{-1}\circ\psi^j_t,
\end{equation}
and note that we now may assume that this is as close as we like to the identity on $Z_{0,S+2}$. \

\medskip

Letting $P_t$ denote the Hamiltonian potential for $\sigma^j_t$ we may then consider
the isotopy $\tilde\sigma^j_t$ whose potential is the function $\chi\cdot P_t$. Then
we may still assume that $\tilde\sigma_t$ is is close we like to the identity on $Z_{0,S_1+2}$,
it is equal the identity on $Z_{0,S_1}$, and it is equal to $\sigma^j_t$ outside
of $Z_{0,S_1+1}$.  Note that we may now assume that $\Psi^j_t\circ\tilde\sigma^j_t$
approximates $\psi^j_t$ to arbitrary precision on $Z_0$ and also that
$(\Psi^j_t\circ\tilde\sigma^j_t)^{-1}$ approximates $(\psi^j_t)^{-1}$ to arbitrary precision on $Z_0$. \

\medskip

Next we fix $R^{j+1}_2$ such that $\Psi^j_1(R^{j+1}_1\mathbb B^N)\subset\subset R^{j+1}_2\mathbb B^N$.
Choose $T$ such that $Z_{0,T}$ contains the entire $\tilde\sigma^j_t$-orbit and  $(\tilde\sigma^j_t)^{-1}$-orbit of $Z_{0,R^{j+1}_2+1}$ in its interior,
let $\tilde\chi$ be a cutoff function such that $\tilde\chi(z)=0$ for all $\|z\|\leq T$ which
is identically equal to $1$ for $\|z\|\geq T+1$. Note that $\tilde\sigma^{j}_t$ may be extended continuously
to the identity map near $R^{j+1}_1\overline{\mathbb B^N}$, thus by arguing additionally as in the proof
of Theorem \ref{thm:al}, we may get an approximating isotopy $\tilde\Psi^j_t$ as in Theorem \ref{thm:al},
approximating $\tilde\sigma^j_t$ to arbitrary precision near $Z_{0,T+2}$, and the identity to arbitrary precision
on $R^{j+1}_1\mathbb B^N$.

\medskip

Next, we set
\begin{equation}
\widehat\sigma^j_t = \psi^j_t\circ (\Psi^j_t\circ\tilde\Psi^j_t)^{-1}
\end{equation}
We let $\widehat P_t$ denote the potential of $\widehat\sigma^j_t $, and we finally
set $\psi^{j+1}_t$ be the Hamiltonian flow on $Z_0$ whose potential is $\tilde\chi \widehat P_t$,
and note that $\psi^{j+1}_t$ is the identity near $R^{j+1}_2\overline{\mathbb B^N}$.
Finally, setting $\phi_{j+1}:=\Psi^j_1\circ \tilde\Psi^j_1\circ\phi_j$, we see that we have established
($1_{j+1}$)-($5_{j+1}$).  \

Finally, it is standard to construct the sequence $\phi_j$, choosing each $\epsilon_j$ sufficiently small,
such that $\phi_j$ converges to the desired approximating automorphism of $Z$, we leave
the details to the reader.

\section{Some examples}\label{sec:exm}
In this section we present some typical example of coadjoint orbits of real Lie groups and their complexifications.

\begin{exm} (The flat space) Let $G_0$ be the Heisenberg group given by
$$G_0=\{g=\left(\begin{array}{ccc}
1  & a & c\\
0  & 1 & b \\
0  & 0 & 1
\end{array}\right): a, b, c\in\mr\},$$
whose complexification $G$ is defined by the same form by requiring $a, b, c\in\mc$.
\bi
\item Coajoint orbits in $\mg^*_0$.

We can identify $\mg^*_0$ with $\mr^3$, with the coadjoint action given by
$$g\cdot(x_1,x_2,x_3)=(x_1+bx_3, x_2-ax_3, x_3).$$
We consider the coadjoint orbit $\mathcal O^\mr_x$ through $x=(x_1, x_2, x_3)$.
If $x_3=0$, $\mathcal O^\mr_x$  is a single point; if $x_3\neq 0$,
we can identify $\mathcal O^\mr_x$  with $(\mr^2, \omega_0)$ by the map
$$(x_1+bx_3,x_2-ax_3)\mapsto (x, y)\in\mr^2,$$
where $\omega_0=\frac{1}{x_3}dx_1\wedge dx_2$.
In particular, if $x_3=1$, $\mathcal O^\mr_x$  is isomorphic as symplectic manifolds to $\mr^2$ with the standard syplectic structure.

\item Coajoint orbits in $\mg^*_0$.

We can identify $\mg^*$ with $\mc^3$,
with the adjoint action of $G$ on $\mg^*$ given by the same way.
The coadjoint orbit $\mathcal O_z$ with $z=(z_1,z_2,z_3)\in\mc^3$ is a single point if $z_3=0$,
and is isomorphic to $(\mc^2,\Omega_0)$ with $\Omega_0=z_3^{-1}dz_1\wedge dz_2$ if $z_3\neq 0$.

When $z=x\in\mr^3$, $(\mathcal O_z, \Omega_0, \tau)$ is a complexification of $(\mathcal O_x, \omega_0)$,
with the antiholomorphic convolution $\tau$ given by the complex conjugate.
\ei
\end{exm}

\begin{exm}(The parabolic space) Let $G_0=SO(3)\approx SU(2)$ be the special orthogonal group.
The complexification $G$ of $G_0$ is $SO(3,\mc)$.
\bi
\item Coajoint orbits in $\mg^*_0$.

We can identify $\mg^*_0$ with $\mr^3$, with the coadjoint action given by rotations.
The coadjoint orbits are parametrized by $\mr_{\geq 0}$ and given by:
\begin{equation}\label{eqn:orbit para real case}
x_1^2+x_2^2+x_3^2=R^2,
\end{equation}
with the canonical symplectic structure given by
$$\omega=\frac{dx_2\wedge dx_3}{x_1}.$$
When $R=0$, the orbit is a single point, and when $R=1$, the orbit is isomorphic to the Riemann sphere,
with the symplectic structure given by the Fubini-Studty metric.

\item Coajoint orbits in $\mg^*$.

We identify $\mg^*=\mg^*_0\otimes \mc$ with $\mc^3$.
The origin is a closed coadjoint orbit, and other closed
coadjoint orbits are parametrized by $\mc^*$ and have the form:
\begin{equation}\label{eqn:orbit para complex case}
 z_1^2+z_2^2+z_3^2=R^2,\ (R\in\mc^*),
\end{equation}
with the canonical holomorphic symplectic structure given by
$$\Omega=\frac{dz_2\wedge dz_3}{z_1}.$$

When $R\in \mr_{>0}$, the coajoint orbit of $G$ given by \eqref{eqn:orbit para complex case}
is a complexfication of the coadjoint orbit of $G_0$ given by \eqref{eqn:orbit para real case},
with the antiholomorphic convolution $\tau$ given by the complex conjugate.
\ei
\end{exm}

\begin{exm}(The hyperbolic space)
Let $G_0=SO(2,1)\approx SL(2,\mr)$, whose complexification is $G=SO(2,1,\mc)\approx SL(2,\mc)$.

The coadjoint action is equivalent to the standard action of $SO(2,1)$ on $\mr^3$.
The orbits are parametrized by $\mr$ and have the following form:
$$x_3^2-x_1^2-x_2^2=R.$$
The are divided into 3 types corresponding to $R>0, R=0$ and $R<0$.
Here we just consider the case that $R>0, x_3>0$.
For this case, the isotropy group of the coadjoint action at $(0,0, R)$ is $S^1$,
and the coadjoint orbit is given by $SL(2,\mr)/S^1$,
which is isomorphic to the upper half plane $\mathbb H$ with the symeplectic structure given by
the Poincar\'{e} metric.

The coadjoint orbits in $\mg^*\approx\mc^3$ are given by the same equations.
They are complexifications of coadjoint orbits of $G_0$,
with the antiholomorphic convolution $\tau$ given by the complex conjugate.

\end{exm}

\bibliographystyle{amsplain}

\end{document}